\newcommand{\mb}[1]{\ensuremath{\mathbf{#1}}}
\newcommand{\fnc}[1]{\ensuremath{\mathcal{#1}}}
\newcommand{\mr}[1]{\ensuremath{\mathrm{#1}}}
\newcommand{\mat}[1]{\ensuremath{\mathsf{#1}}}
\definecolor{dkgreen}{rgb}{0,0.6,0}
\definecolor{gray}{rgb}{0.5,0.5,0.5}
\definecolor{mauve}{rgb}{0.58,0,0.82}
\tiny\color{gray},
\tiny\color{blue},
\tiny\color{dkgreen},
\tiny\color{mauve},
\title{Generalized Summation-by-Parts Operators for the Second Derivative with Variable Coefficients} 
\author{
David C. Del Rey Fern\'andez
\footnote[2]{\lowercase{\uppercase{P}h.D. \uppercase{C}andidate, \uppercase{I}nstitute for \uppercase{A}erospace \uppercase{S}tudies, \uppercase{U}niversity of \uppercase{T}oronto, \uppercase{T}oronto, \uppercase{O}ntario, \uppercase{M}3\uppercase{H} 5\uppercase{T}6, \uppercase{C}anada (\email{dcdelrey@gmail.com})}.} 
and David W. Zingg
\footnote[3]
{
\lowercase{
\uppercase{P}rofessor and \uppercase{D}irector, \uppercase{T}ier 1 \uppercase{C}anada \uppercase{R}esearch \uppercase{C}hair in \uppercase{C}omputational \uppercase{A}erodynamics and \uppercase{E}nvironmentally-\uppercase{F}riendly \uppercase{A}ircraft \uppercase{D}esign, \uppercase{J}. \uppercase{A}rmand \uppercase{B}ombardier \uppercase{F}oundation \uppercase{C}hair in \uppercase{A}erospace \uppercase{F}light \uppercase{I}nstitute for \uppercase{A}erospace \uppercase{S}tudies, \uppercase{U}niversity of \uppercase{T}oronto, \uppercase{T}oronto, \uppercase{O}ntario, \uppercase{M}3\uppercase{H} 5\uppercase{T}6, \uppercase{C}anada (\email{dwz@oddjob.utias.utoronto.ca}).
}
}
}
\begin{document}
\maketitle
\slugger{sisc}{xxxx}{xx}{x}{x--x}

\begin{abstract}
The comprehensive generalization of summation-by-parts of Del Rey Fern\'andez et al.\ (J. Comput. Phys., 266, 2014) is extended to approximations of second derivatives with variable coefficients. This enables the construction of second-derivative operators with one or more of the following characteristics: i) non-repeating interior stencil, ii) nonuniform nodal distributions, and iii) exclusion of one or both boundary nodes. Definitions are proposed that give rise to generalized SBP operators that result in consistent, conservative, and stable discretizations of PDEs with or without mixed derivatives. It is proven that such operators can be constructed using a correction to the application of the first-derivative operator twice that is the same as used for the constant-coefficient operator. Moreover, for operators with a  repeating interior stencil, a decomposition is proposed that makes the application of such operators particularly simple. A number of novel operators are constructed, including operators on pseudo-spectral nodal distributions and operators that have a repeating interior stencil, but unequal nodal spacing near boundaries. The various operators are  compared to the application of the first-derivative operator twice in the context of the linear convection-diffusion equation with constant and variable coefficients.
\end{abstract}

\begin{keywords}
generalized summation-by-parts, finite difference, simultaneous approximation terms, second derivative
\end{keywords}

\begin{AMS}
65M06
\end{AMS}

\pagestyle{myheadings}
\thispagestyle{plain}
\markboth{Generalized Summation-By-Parts Operators for Second Derivatives}{David C. Del Rey Fern\'andez and David W. Zingg}
\section{Introduction}\label{Introduction}
The focus of this paper is on developing consistent, conservative, and provably stable high-order approximations of the second derivative with variable coefficients. One such methodology is the combination of summation-by-parts (SBP) operators with boundary conditions and inter-element coupling weakly enforced using simultaneous approximation terms (SATs) \cite{Carpenter1994,Carpenter1999,Nordstrom1999,Nordstrom2001b,Mattsson2004b,Mattsson2012}. 

The most straightforward means of approximating the second derivative is to apply a first-derivative operator twice. However, this has the drawback that the resultant operator is one order less accurate than the first derivative \cite{Mattsson2004b,Mattsson2008}. Alternatively, operators of the same order of accuracy as the first derivative can be constructed. For the classical finite-difference (FD) SBP method, it is possible to construct minimum-stencil operators which have the same bandwidth and order as first-derivative operators. Classical minimum-stencil FD-SBP operators were first investigated by Mattsson and Nordstr\"om \cite{Mattsson2004b} and refined by Mattsson, Sv\"ard, and Shoeybi \cite{Mattsson2008} for the constant-coefficient second derivative; they have the advantage of lower bandwidth and better damping of under-resolved modes. Subsequently, Mattsson \cite{Mattsson2012} extended classical minimum-stencil FD-SBP operators to approximations of the second derivative with variable coefficients---his work represents the current state of the art in classical FD-SBP operators for the second derivative with variable coefficients.

A distinction is made between SBP operators approximating the second derivative that are compatible with the first-derivative SBP operator and those that are not. With appropriate SATs, compatible operators lead to stable semi-discrete forms for partial differential equations (PDEs) that contain cross-derivative terms, such as $\frac{\partial^{2}}{\partial x\partial y}$ \cite{Mattsson2008}. When operators that are not compatible are used to approximate such PDEs, the operators must satisfy additional constraints for an energy estimate to exist. Nevertheless, such operators can always be used, with appropriate SATs, to construct consistent, conservative, and stable semi-discrete forms for PDEs that do not contain cross-derivative terms. This issue was first highlighted by Mattsson, Sv\"ard, and Shoeybi \cite{Mattsson2008}.

The theory of SBP operators has primarily been developed within the context of FD methods, with notable exceptions (see for example \cite{Nordstrom2003,Carpenter1996,Hesthaven1996}). Classical FD-SBP operators were first proposed by Kreiss and Scherer \cite{Kreiss1974} (see Strand for a review of the theory and general solutions \cite{Strand1994}; also see the review papers \cite{Fernandez2014} and \cite{Svard2014}). These SBP operators are constructed to have repeating centered-difference interior operators with biased operators at and near boundary nodes, so that the resultant scheme satisfies the classical definition of an SBP operator. Del Rey Fern\'andez et al.\ \cite{DCDRF2014} have extended the classical FD-SBP theory to operators that have one or more of the following characteristics:  i) non-repeating interior stencil, ii) nonuniform nodal distributions, and iii) exclusion of one or both boundary nodes. Operators having such characteristics are called generalized SBP (GSBP) operators---this terminology has previously been used by Reichert et al.\ \cite{Reichert2011,Reichert2012} to refer to FD methods that relax the definition of an SBP operator (see also \cite{Carpenter1994,Chertock1998,Abarbanel2000,Abarbanel2000b}), but here we use it to indicate the comprehensive generalizations in \cite{DCDRF2014}. Some of the ideas contained within the GSBP framework have individually been discussed by other authors. For example, Carpenter and Gottlieb \cite{Carpenter1996} showed that GSBP operators of maximum degree that include boundary nodes always exist on nearly arbitrary nodal distributions. More recently, Gassner \cite{Gassner2013} has shown that the discontinuous Galerkin collocation spectral element method with Gauss-Lobatto points can be thought of as an SBP-SAT scheme. 

The primary objective of this paper is to investigate the opportunities provided by the GSBP approach to develop efficient operators for the second derivative with variable coefficients. In particular, operators that are more accurate than the application of the first-derivative operator twice. The secondary objective is to further develop the theory of GSBP operators with a repeating interior stencil, as well as classical FD-SBP operators; we do so by proposing a formalism that is a simplification of the work of Kamakoti and Pantano \cite{Kamakoti2009} to easily allow the inclusion of boundary nodes. This formalism leads to a very simple representation of operators with a repeating interior stencil that could be advantageous from an implementation standpoint, both for function evaluations, as well as constructing the Jacobian matrix of implicit methods.

This paper is organized as follows: in Section \ref{Notation}, the notation of the paper is introduced. The first derivative is important for the construction of compatible GSBP operators for the second derivative, so a brief review is given in Section \ref{Generalized SBP operators for the first derivative}. General definitions for non-compatible and compatible GSBP operators for the second derivative are given in Section \ref{Generalized SBP operators for the second derivative: Preliminaries}. In  Section \ref{D2Theory}, we prove that the existence of the constant-coefficient second-derivative GSBP operator guarantees the existence of the variable-coefficient GSBP operator. Section \ref{Generalized SBP operators for the second derivative: Theory for SBP operators with a repeating interior stencil} details additional considerations for GSBP operators with a repeating interior stencil, including classical FD-SBP operators. We present two formulations for operators with a repeating interior stencil, one of which is based on the work in \cite{Mattsson2004b,Mattsson2008, Mattsson2012,Fernandez2012,Fernandez2013,Fernandez2014}, while a more general formulation is constructed by extending the ideas of Kamakoti and Pantano \cite{Kamakoti2009} to include nodes at and near boundaries. Various GSBP and classical FD-SBP operators for the second derivative are constructed in Section \ref{Construction of SBP operators for the second derivative}, including novel GSBP operators on pseudo-spectral nodal distributions and operators that have a repeating interior stencil with variable node-spacing at boundaries---similar in spirit to those developed by Mattsson, Almquist, and Carpenter \cite{Mattsson2014}, but derived by considering the quadrature rules proposed by Alpert \cite{Alpert1999}. These operators are then validated numerically by solving the linear convection-diffusion equation with constant or variable coefficients in Section \ref{Numerical Results}. Finally, conclusions and future work are discussed in Section \ref{Conclusions and future work}.
\section{Notation and definitions}\label{Notation}
The conventions in this paper are based on those laid out in \cite{Hicken2011b,Fernandez2014, DCDRF2014}. GSBP operators refer to SBP operators characterized by one of the following generalizations:  i) non-repeating interior stencil, ii) nonuniform nodal distribution, and iii) exclusion of one or both boundary nodes. On the other hand, the operators originally developed in \cite{Kreiss1974} and \cite{Strand1994} are referred to as classical FD-SBP operators.

Spatial discretization of PDEs can be implemented out using a traditional FD approach where $h$-refinement is carried out by increasing the number of grid nodes. Alternatively, spatial discretization can be implemented using an element approach where the domain is subdivided into a number of elements and $h$-refinement is carried out by increasing the number of elements, with a fixed number of nodes in each element. GSBP operators that have a fixed nodal distribution can only be applied using an element approach.

Vectors are denoted with small bold letters, for example $\mb{x}=[x_{1},\dots,x_{N}]^{T}$, while matrices are presented using capital letters with sans-serif font, for example $\mat{M}$. Capital letters with script type are used to denote continuous functions on a specified domain $x\in[x_{L},x_{R}]$. As an example, $\fnc{U}(x)\in C^{\infty}[x_{L},x_{R}]$ denotes an infinitely differentiable function on the domain $x\in[x_{L},x_{R}]$. Lower case bold font is used to denote the restriction of such functions onto a grid; for example, the restriction of $\fnc{U}$ onto the grid $\mb{x}$ is given by:
\begin{equation}
\mb{u} = \left[\fnc{U}(x_{1}),\dots,\fnc{U}(x_{N})\right]^{T}.
\end{equation}
Vectors with a subscript $h$, for example $\mb{u}_{h}\in \mathbb{R}^{N\times 1}$, represent the solution to a system of discrete or semi-discrete equations.

The restriction of monomials onto a set of nodes is used throughout this paper and is represented by $\mb{x}^{k} = \left[x_{1}^{k},\dots,x_{N}^{k}\right]^{T}$, with the convention that $\mb{x}^{k}=0$ if $k<0$. A superscript is used to denote the order of an operator and a subscript is used to denote which derivative is being approximated. For example, $\mat{D}_{1}^{(p)}$ denotes an SBP approximation to the first derivative of order and degree $p$. The second derivative can be approximated by applying an SBP operator approximating the first derivative twice or by constructing SBP operators that have preferential properties. For this latter type, it is necessary to differentiate between approximations of the constant-coefficient derivative and the variable-coefficient derivative. The convention used is best shown through an example: $\mat{D}_{2}^{(p)}$ represents an order $p$ SBP approximation to the constant-coefficient second derivative, while $\mat{D}_{2}^{(p)}\left(\mat{B}\right)$ represents the approximation to the second derivative with variable coefficients $\fnc{B}$, where $\mat{B} = \mr{diag}[\fnc{B}(x_{1}),\dots,\fnc{B}(x_{N})]$ and $\fnc{B}$ is the variable coefficient. We discuss the degree of SBP operators; that is, the degree of monomial for which they are exact, as well as the order of the operator. The approximation of the derivative has a leading truncation error term for each node, proportional to some power of $h$. The order of the operator is taken as the smallest exponent of $h$ in these truncation errors. The relation between the two for an operator approximating the $m^{\textrm{th}}$ derivative is
\begin{equation}\label{OrderVSdegree}
\textrm{Order} = \textrm{degree}-m+1.
\end{equation}

For GSBP operators with a repeating interior stencil, as well as classical FD-SBP operators, the first and second derivatives are of different order on the interior and near the boundary. In order to differentiate between operators and the various orders, when necessary a superscript is appended to operators for the orders and a subscript is appended to denote which derivative is being approximated. For example, $\mat{D}^{(a,b)}_{i,e}$ denotes the operator for the $i^{\textrm{th}}$ derivative with interior order of $a$ and a minimum order of  $b$ at and near boundary nodes, while the additional subscript $e$ is to differentiate among various versions of the operator. In some cases, one or several of the superscripts are not of interest and are replaced with colons; as an example, $\mat{D}^{(2,:)}_{3}$ denotes an approximation to the third derivative which is of order $2$ on the interior, where the minimum order of nodes near and at the boundary is not specified.

For later use, the $L_{2}$  inner product and norm are defined as
\begin{equation}
\begin{array}{lr}
(\fnc{U},\fnc{V}) = \int_{x_{L}}^{x_{R}}\fnc{U}\fnc{V}\mr{d}x,&||\fnc{U}||^{2} = \int_{x_{L}}^{x_{R}}\fnc{U}^{2}\mr{d}x.
\end{array}
\end{equation}
A discrete inner product and norm have the form
\begin{equation}\label{SBPnorm1}
\begin{array}{lr}
(\mb{u},\mb{v})_{\mat{H}} = \mb{u}^{T}\mat{H}\mb{v},&||\mb{u}||^{2}_{\mat{H}} = \mb{u}^{T}\mat{H}\mb{u},
\end{array}
\end{equation}
where $\mat{H}$ must be symmetric and positive definite.
\section{Generalized SBP operators for the first derivative}\label{Generalized SBP operators for the first derivative}
The set of PDEs of interest here contain both first and second derivatives, and may or may not contain mixed derivatives. For such PDEs, it is possible to construct GSBP operators that lead to stable schemes if certain relationships exist between the first- and second-derivative operators. Therefore, in this section, GSBP operators for the first derivative are briefly reviewed; for classical FD-SBP operators, see the two review papers \cite{Fernandez2014} and \cite{Svard2014}. For more information on GSBP operators for the first derivative, see Del Rey Fern\'andez et al.\ \cite{DCDRF2014}.

To motivate the definition of an SBP operator for the first derivative, consider the unsteady linear-convection equation
\begin{equation}\label{Generalized SBP operators for the first derivative:1}
\frac{\partial \fnc{U}}{\partial t}=-\frac{\partial \fnc{U}}{\partial x},\;x\in[x_{L},x_{R}],\;t\ge0,
\end{equation}
where neither an initial condition nor a boundary condition is specified. The energy method is applied to (\ref{Generalized SBP operators for the first derivative:1}) to construct an estimate on the solution, called an energy estimate, which is then used to determine stability (for more information see \cite{Gustafsson2008,Gustafsson2013,Kreiss2004}). This consists of multiplying the PDE by the solution and integrating in space and transforming the volume integral on the RHS using integration-by-parts. This leads to 
\begin{equation}\label{Generalized SBP operators for the first derivative:2}
\frac{\partial \left\|\fnc{U}\right\|^{2}}{\partial t}=-\left.\fnc{U}^{2}\right|_{x_{L}}^{x_{R}}.
\end{equation}
SBP operators for the first derivative are constructed such that when the energy method is applied to the semi-discrete or fully discrete equations, energy estimates analogous to (\ref{Generalized SBP operators for the first derivative:2}) can be constructed. This leads to the following definition \cite{DCDRF2014}

 \begin{definition}\label{DEFSBPgen}
{\bf Generalized summation-by-parts operator:} A matrix operator $\mat{D}_{1}^{(p)}\in\mathbb{R}^{N\times N}$ is an approximation to the first derivative, on the nodal distribution $\mb{x}$, of order and degree $p$ with the SBP property if
 \begin{enumerate}
 \item $\mat{D}_{1}^{(p)}\mb{x}^{j}= \mat{H}^{-1}\mat{Q} \mb{x}^{j}= j\mb{x}^{j-1}$, $j\in[0,p]$;
 \\
 \item $\mat{H}$, denoted the norm matrix, is symmetric positive definite; and
  \\
 \item $\mat{Q}+\mat{Q}^{T} = \mat{E}$, where $\left(\mb{x}^{i}\right)^{T}\mat{E}\mb{x}^{j}=x_{L}^{i+j}-x_{R}^{i+j}$, $i,j\in[0,r]$, $r\ge p$ and $x_{R}$ and $x_{L}$ are the left and right spatial locations of the boundaries of the block or element.
 \end{enumerate}
  \end{definition}
\noindent The nodal distribution, $\mb{x}$, in Definition \ref{DEFSBPgen} need neither be uniform nor include the boundary nodes. 
 
 Both classical FD-SBP and GSBP operators can be constructed with either a diagonal-norm $\mat{H}$ or a dense-norm $\mat{H}$, where dense norm refers to any $\mat{H}$ that is not diagonal. The matrix $\mat{E}$ is constructed as \cite{DCDRF2014}
  \begin{equation}
  \mat{E} =\mb{t}_{x_{R}}\mb{t}_{x_{R}}^{T}-\mb{t}_{x_{L}}\mb{t}_{x_{L}}^{T}=\mat{E}_{x_{R}}-\mat{E}_{x_{L}}.
  \end{equation}
  The vectors $\mb{t}_{x_{R}}$ and $\mb{t}_{x_{L}}$ satisfy the relations
  \begin{equation}
  \mb{t}_{x_{R}}^{T}\mb{x}^{j} = x_{R}^{j},\;\mb{t}_{x_{L}}^{T}\mb{x}^{j} = x_{L}^{j},\;j\in[0,r],
  \end{equation} 
  and can be thought of as projection operators. This means that $ \mb{t}_{x_{R}}^{T} \mb{u}$ and  $ \mb{t}_{x_{L}}^{T} \mb{u}$ are degree $r$ approximations to $\fnc{U}\left(x_{R}\right)$ and $\fnc{U}\left(x_{L}\right)$, respectively, and are, therefore, of order $r+1$; that is,  
   \begin{equation}\label{degt}
\mb{t}_{x_{R}}^{T}\mb{u}=\fnc{U}(x_{R})+\fnc{O}\left(h^{r+1}\right),\;
\mb{t}_{x_{L}}^{T}\mb{u}=\fnc{U}(x_{L})+\fnc{O}\left(h^{r+1}\right),
 \end{equation}
 where $\mb{u}$ is the projection of $\fnc{U}$ onto the nodal distribution. In (\ref{degt}), $h$ is the spacing between nodes for GSBP operators applied using the traditional approach, while for GSBP operators applied using the element approach, $h$ is some measure of the spacing between nodes, for example the average spacing.

 The semi-discrete representation of (\ref{Generalized SBP operators for the first derivative:1}) using GSBP operators is  
 \begin{equation}\label{Generalized SBP operators for the first derivative:3}
 \frac{\mr{d}\mb{u}_{h}}{\mr{d}x}=-\mat{D}_{1}^{(p)}\mb{u}_{h},
 \end{equation}
 where no boundary or initial conditions are imposed. The energy method consists of multiplying (\ref{Generalized SBP operators for the first derivative:3}) by $\mb{u}_{h}^{T}\mat{H}$ and adding the transpose of the product, which gives
 \begin{equation}\label{Generalized SBP operators for the first derivative:4}
 \frac{\mr{d}\left\|\mb{u}_{h}\right\|_{\mat{H}}^{2}}{\mr{d}x}=-\mb{u}_{h}^{T}\left[\mat{H}\mat{D}_{1}^{(p)}+\left(\mat{D}_{1}^{(p)}\right)^{T}\mat{H}\right]\mb{u}_{h}.
 \end{equation}
 \noindent Using Definition (\ref{DEFSBPgen}) results in 
  \begin{equation}\label{Generalized SBP operators for the first derivative:5}
 \frac{\mr{d}\left\|\mb{u}_{h}\right\|_{\mat{H}}^{2}}{\mr{d}x}=-\mb{u}_{h}^{T}\mat{E}\mb{u}_{h}=-\left(\tilde{u}_{x_{R}}^{2}-\tilde{u}_{x_{L}}^{2}\right),
 \end{equation} 
 where $\tilde{u}_{x_{R}}=\mb{t}_{x_{R}}^{T}\mb{u}_{h}$ and $\tilde{u}_{x_{L}}=\mb{t}_{x_{L}}^{T}\mb{u}_{h}$, and it can be seen that (\ref{Generalized SBP operators for the first derivative:5}) is a discrete analogue of (\ref{Generalized SBP operators for the first derivative:2}).
 \vspace{2cm}
\section{Generalized SBP operators for the second derivative}\label{Generalized SBP operators for the second derivative}
\subsection{Preliminaries}\label{Generalized SBP operators for the second derivative: Preliminaries}
In this section, the definition of classical FD-SBP operators approximating the second derivative, given by \cite{Mattsson2004b,Mattsson2008,Mattsson2012}, is extended to accommodate the derivation of GSBP operators. The form we propose combines ideas from \cite{Mattsson2004b,Mattsson2008,Mattsson2012}, as well as our extension of the ideas of Kamakoti and Pantano \cite{Kamakoti2009} on the interior stencil of classical FD-SBP operators (see Section \ref{Generalized SBP operators for the second derivative: Theory for SBP operators with a repeating interior stencil}).

The equations that an operator must satisfy in order to approximate the second derivative with a variable coefficient, denoted the degree equations, are based on monomials restricted onto the nodes of the grid. Given that the operator must approximate {\footnotesize $\frac{\partial}{\partial x}\left(\fnc{B}\frac{\partial \fnc{U}}{\partial x}\right)$}, it is necessary to determine what degree monomial to insert for {\small$\fnc{B}$} and {\small$\fnc{U}$} in constructing the degree equations. Taking {\small$\fnc{B}=x^{k}$} and {\small$\fnc{U}=x^{s}$} and inserting into the second derivative gives
\begin{equation}
\frac{\partial}{\partial x}\left(x^{k}\frac{\partial x^{s}}{\partial x}\right)=s(k+s-1)x^{k+s-2}.
\end{equation}
To be of order $p$, second-derivative operators must be of degree $p+1$ (from \ref{OrderVSdegree}). This implies that all combinations of $k+s\leq p+1$ must be satisfied. Thus, the degree equations have the following form:
\begin{equation}\label{accD2V}
\mat{D}_{2}^{(p)}\left(\textrm{diag}\left(\mb{x}^{k}\right)\right)\mb{x}^{s}=s(k+s-1)\mb{x}^{k+s-2},\: k+s\leq p+1,
\end{equation}
where $\textrm{diag}\left(\mb{x}^{k}\right)$ is a diagonal matrix such that the $i^{\textrm{th}}$ diagonal entry is the $i^{\textrm{th}}$ entry of $\mb{x}^{k}$. If there are $N$ nodes in the nodal distribution, then each combination of $k+s$ in (\ref{accD2V}) returns a vector of $N$ equations.

The maximum attainable degree and order for an operator for the second derivative are given by the following lemma:
\begin{lemma}\label{MaxdegD2}
An operator, $\mat{D}_{2}^{(p)}\in\mathbb{R}^{N\times N}$, is at most of order $p\leq N-2$ and degree $N-1$.
\begin{proof}
Consider the degree equations for a constant-coefficient operator:
\begin{equation}
\mat{D}_{2}^{(p)}\mb{x}^{k}=k\left(k-1\right)\mb{x}^{k-2},\;k\in[0,p+1].
\end{equation}
Taking $p=N-2$, the degree equations can be recast as
\begin{equation}
\mat{D}_{2}^{(N-2)}\mat{X}=\tilde{\mat{X}},
\end{equation}
where $\mat{X}=\left[\mb{x}^{0},\dots,\mb{x}^{N-1}\right]$ and $\tilde{\mat{X}}=\left[\mb{0},\mb{0},2\mb{x}^{0},\dots,N\left(N-1\right)\mb{x}^{N-2}\right]$. The matrix $\mat{X}$ is the Vandermond matrix and is invertible, where the columns of $\mat{X}$ represent a basis for $\mathbb{R}^{N\times N}$. Therefore, a unique solution exists, given as $\mat{D}_{2}=\tilde{\mat{X}}\mat{X}^{-1}$, and by examining the range of the operator $\mat{D}_{2}$, i.e., $\tilde{\mat{X}}$, it is clear that $\mat{D}_{2}$ is of most degree $N-1$ and hence order $p=N-2$.
\end{proof}
\end{lemma}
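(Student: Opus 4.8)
The plan is to reduce the claim about general degree $p$ to the extremal case $p = N-2$, since an operator that is accurate to degree $p+1$ with $p+1 > N-1$ would in particular be accurate to all degrees up to $N-1$, so it suffices to rule out degree $\geq N$. First I would write out the degree equations for the constant-coefficient second derivative, $\mat{D}_{2}^{(p)}\mb{x}^{k}=k(k-1)\mb{x}^{k-2}$ for $k\in[0,p+1]$, and observe that these are linear constraints on the $N\times N$ matrix $\mat{D}_{2}^{(p)}$, one vector equation for each admissible $k$.

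Next I would collect the constraints for $k=0,\dots,N-1$ into the single matrix equation $\mat{D}_{2}^{(p)}\mat{X}=\tilde{\mat{X}}$, where $\mat{X}=[\mb{x}^{0},\dots,\mb{x}^{N-1}]$ is the Vandermonde matrix on the $N$ distinct nodes and $\tilde{\mat{X}}=[\mb{0},\mb{0},2\mb{x}^{0},6\mb{x}^{1},\dots,N(N-1)\mb{x}^{N-2}]$ is the matrix of right-hand sides (using the convention $\mb{x}^{j}=0$ for $j<0$). Because the nodes are distinct, $\mat{X}$ is invertible; hence the matrix $\mat{D}_{2}^{(N-2)}$ satisfying all these constraints is uniquely determined as $\mat{D}_{2}^{(N-2)}=\tilde{\mat{X}}\mat{X}^{-1}$. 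This establishes both existence and uniqueness of the degree-$(N-1)$ operator.

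Finally I would argue the upper bound itself: the range (column space) of $\mat{D}_{2}^{(N-2)}$ is contained in the span of the columns of $\tilde{\mat{X}}$, which is a subset of $\mathrm{span}\{\mb{x}^{0},\dots,\mb{x}^{N-2}\}$, a proper subspace of $\mathbb{R}^{N}$. Therefore $\mat{D}_{2}^{(N-2)}\mb{x}^{N}$ cannot equal $N(N-1)\mb{x}^{N-2}$ and simultaneously be consistent unless... here the cleanest route is the counting/contradiction argument: if an operator were of degree $N$, it would have to satisfy $N+1$ independent vector constraints built from $\mb{x}^{0},\dots,\mb{x}^{N}$, but the $\mb{x}^{k}$ for $k=0,\dots,N-1$ already span $\mathbb{R}^N$ and force $\mat{D}_2 = \tilde{\mat{X}}\mat{X}^{-1}$, while the extra constraint for $k=N$ expresses $\mb{x}^{N}$ (a vector already in the span of the first $N$ monomials via a Vandermonde relation) being sent to $N(N-1)\mb{x}^{N-2}$, which is inconsistent with the fixed value of $\mat{D}_2$ in general. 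I expect the main obstacle to be phrasing this last step rigorously and cleanly: one must verify that the forced image $\mat{D}_{2}^{(N-2)}\mb{x}^{N}$, computed via the Vandermonde expansion of $\mb{x}^{N}$ in the basis $\{\mb{x}^{0},\dots,\mb{x}^{N-1}\}$, does not coincide with $N(N-1)\mb{x}^{N-2}$ for generic node sets — or, more carefully, simply conclude the degree bound directly from the fact that $\mathrm{range}(\mat{D}_{2}^{(N-2)})\subseteq\mathrm{span}\{\mb{x}^{0},\dots,\mb{x}^{N-2}\}$ implies the operator can be exact for at most degree-$(N-1)$ data, and then invoke (\ref{OrderVSdegree}) with $m=2$ to get order $p\leq N-2$.
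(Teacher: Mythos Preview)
Your proposal follows essentially the same approach as the paper: form the Vandermonde matrix equation $\mat{D}_{2}\mat{X}=\tilde{\mat{X}}$, invoke invertibility of $\mat{X}$ to get existence and uniqueness of the degree-$(N-1)$ operator, and then appeal to the range of $\tilde{\mat{X}}$ (contained in $\mathrm{span}\{\mb{x}^{0},\dots,\mb{x}^{N-2}\}$) together with (\ref{OrderVSdegree}) to cap the degree and order. The paper's own proof is actually less explicit than yours about the last step---it simply asserts the degree bound ``by examining the range'' without the contradiction analysis you sketch---so your version is, if anything, a bit more careful.
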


An immediate consequence of Lemma \ref{MaxdegD2} is the following corollary:

\begin{corollary}\label{MaxdegD2B}
An operator $\mat{D}_{2}^{(p)}\left(\mat{B}\right)\in\mathbb{R}^{N\times N}$, approximating the second derivative with variable coefficients, is at most of order $p=N-2$ and degree $N-1$.
\begin{proof}
The set of equations for the constant-coefficient case is a subset of the equations for the variable-coefficient operator, and therefore, by Lemma \ref{MaxdegD2} $\mat{D}_{2}^{(p)}\left(\mat{B}\right)$ is, at best, of order $N-2$.
\end{proof}
\end{corollary}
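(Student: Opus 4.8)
The plan is to exploit the fact that the degree equations defining a constant-coefficient operator are a literal special case of the variable-coefficient degree equations~(\ref{accD2V}). First I would set $k=0$ in~(\ref{accD2V}). Since $\mb{x}^{0}$ is the vector of all ones, $\textrm{diag}\left(\mb{x}^{0}\right)=\mat{I}$, and the surviving equations read $\mat{D}_{2}^{(p)}\left(\mat{I}\right)\mb{x}^{s}=s(s-1)\mb{x}^{s-2}$ for all $s\leq p+1$. Because $k=0$ is admissible whenever $k+s\leq p+1$, the index $s$ here ranges over the full interval $[0,p+1]$, so this slice reproduces exactly the constant-coefficient degree equations used in Lemma~\ref{MaxdegD2}, not a weaker subset of them.

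Next I would invoke Lemma~\ref{MaxdegD2} directly. Any matrix in $\mathbb{R}^{N\times N}$ satisfying those constant-coefficient degree equations must have $p\leq N-2$ and degree at most $N-1$. Hence, if a variable-coefficient operator $\mat{D}_{2}^{(p)}\left(\mat{B}\right)$ of order $p>N-2$ existed, its restriction to $\mat{B}=\mat{I}$ would furnish a constant-coefficient operator of order exceeding $N-2$, contradicting Lemma~\ref{MaxdegD2}. Equivalently, the full variable-coefficient system in~(\ref{accD2V}) contains the constant-coefficient system as a subsystem, so the variable-coefficient feasible set cannot attain a higher order than the constant-coefficient one; Lemma~\ref{MaxdegD2} then caps the attainable order at $N-2$ and the degree at $N-1$.

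There is essentially no obstacle here: the corollary is a containment-of-constraints observation, and the only point that warrants a line of care is checking that the $k=0$ slice genuinely coincides with the constant-coefficient system rather than merely implying part of it — which it does, for the reason noted above. Note also that the corollary asserts only the upper bound; it does not claim that order $N-2$ is actually achieved by a variable-coefficient operator, so no existence construction is needed in the proof.
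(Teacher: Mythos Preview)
Your proposal is correct and follows essentially the same approach as the paper: both argue that the constant-coefficient degree equations are a subset of the variable-coefficient ones (you make the $k=0$ specialization explicit), and then invoke Lemma~\ref{MaxdegD2} to cap the order at $N-2$ and the degree at $N-1$. Your write-up is simply a more detailed version of the paper's one-line proof.
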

\subsection{GSBP operators for the second derivative}\label{D2Theory}

For classical FD-SBP operators, one of the drawbacks of the application of the first-derivative operator twice is that the interior stencil uses nearly twice as many nodes as minimum-stencil operators. For GSBP operators that can only be applied using an element approach, no such concept exists. Regardless, the application of the first-derivative operator twice results in an approximation that is of lower order than the first-derivative operator. Thus, in general, we search for GSBP operators approximating the second derivative that match the order of the first-derivative operator; such operators are denoted order matched. These ideas lead to the following definition:
\begin{definition}\label{DefD2}
{\bf Order-matched second-derivative GSBP operator:} The matrix $\mat{D}^{(p)}_{2}(\mat{B})\in \mathbb{R}^{N\times N}$ is  a GSBP operator approximating the second derivative, $\frac{\partial}{\partial x}\left(\fnc{B}\frac{\partial \fnc{U}}{\partial x}\right)$, of degree $p+1$ and order $p$ that is order matched to the GSBP operator $\mat{D}_{1}^{(p)}=\mat{H}^{-1}\mat{Q}$, on a nodal distribution $\mb{x}$, if if satisfies the equations
\begin{equation}
\mat{D}_{2}^{(p)}\left(\textrm{diag}\left(\mb{x}^{k}\right)\right)\mb{x}^{s}=s(k+s-1)\mb{x}^{k+s-2},\: k+s\leq p+1,
\end{equation}
 and is of the form
\begin{equation}
 \mat{D}^{(p)}_{2}(\mat{B})=\mat{H}^{-1}\left\{-\mat{M}\left(\mat{B}\right)+\mat{E}\mat{B}\mat{D}_{b}^{(\ge p+1)}\right\},
\end{equation}
 where
 \begin{equation}
 \mat{M}(\mat{B}) = \sum_{i=1}^{N}\mat{B}(i,i)\mat{M}_{i}.
 \end{equation}
The matrices $\mat{M}_{i}$, $\mat{B}$, and $\mat{D}_{b}^{(\ge p+1)}$ are $\in\mathbb{R}^{N\times N}$, $\mat{M}_{i}$ is symmetric positive semi-definite, 
$$\mat{B} = \textrm{diag}(\fnc{B}(x_{1}),\dots,\fnc{B}(x_{N})),$$ 
and $\mat{D}_{b}^{(\ge p+1)}$ is an approximation to the first derivative of degree and order $\ge p+1$.
\end{definition}

If one takes $\mat{B}$ to be the identity matrix, then Definition \ref{DefD2} collapses onto that given by Mattsson and Nordstr\"om \cite{Mattsson2004b} for classical FD-SBP operators---defining the relevant matrix in their definition as the sum of the $\mat{M}_{i}$---where we do not specify further restrictions on the form of the $\mat{M}_{i}$ in order to allow for GSBP operators. The extension to variable coefficients, by taking the sum of matrices multiplied by the variable coefficients, is an extension and simplification of the work by Kamakoti and Pantano, who decompose the internal stencil of FD approximations to the second derivative with variable coefficients as the sum of the variable coefficient multiplying a third-order tensor. Definition \ref{DefD2} can be applied to dense-norm GSBP operators, though we do not pursue this further in this paper.

Definition \ref{DefD2} is sufficient to derive energy estimates, with appropriate SATs, for PDEs that do not contain mixed-derivative terms. Without additional constraints, however, it does not guarantee that an energy estimate exists for PDEs with cross-derivative terms. Further restrictions need to be applied to Definition \ref{DefD2} such that an energy estimate exists. One possibility is what is referred to as compatible operators \cite{Mattsson2008}; these operators are guaranteed to produce energy estimates, again with appropriate SATs, for PDEs with cross-derivative terms. These ideas lead to the following definition:

\begin{definition}\label{DefD2C}
{\bf Order-matched compatible second-derivative GSBP operator:}
A diagonal-norm order-matched GSBP operator, $\mat{D}^{(p)}_{2}(\mat{B})\in \mathbb{R}^{N\times N}$, for the second derivative, is compatible with the first-derivative GSBP operator, $\mat{D}_{1}^{(p)}$, if in addition to the requirements of Definition \ref{DefD2}, 
\begin{equation}
 \mat{M}\left(\mat{B}\right)=\left(\mat{D}_{1}^{(p)}\right)^{T}\mat{H}\mat{B}\mat{D}_{1}^{(p)}+\mat{R}\left(\mat{B}\right),
 \end{equation}
  where
 \begin{equation}
 \mat{R}\left(\mat{B}\right)=\sum_{i=1}^{N}\mat{B}(i,i)\mat{R}_{i},
 \end{equation}
 where $\mat{R}_{i}$ is symmetric positive semi-definite.
\end{definition}

The idea of decomposing the operator as the application of the first-derivative operator twice plus a corrective term was first proposed by Mattsson et al.\ \cite{Mattsson2008} and later used by Mattsson \cite{Mattsson2012} to construct classical FD-SBP operators to approximate the second derivative with variable coefficients. The definition of compatible operators is limited to diagonal-norm operators; for the variable-coefficient case, it is unclear how to derive energy estimates for dense-norm operators (see Mattsson and Almquist \cite{Mattsson2013} for a discussion and potential solution).
 
The compatibility that is necessary is between the first-derivative operators approximating the mixed derivatives and the second-derivative operator. In addition, an energy estimate is guaranteed to exist, with appropriate SATs, if the norms of all operators are the same. In practice, this means that all first-derivative terms are typically approximated using the same GSBP operator.

An order-matched and compatible $\mat{D}_{2}^{(p)}\left(\mat{B}\right)$ SBP operator, as given in Definition \ref{DefD2C}, is the application of the first-derivative operator twice plus a corrective term to increase the degree of the resultant operator. The application of the first-derivative operator twice already satisfies a number of the degree equations (\ref{accD2V}), and the corrective term is added such that the remaining degree equations in order for the operator to be of order $p$ are satisfied, while continuing to satisfy those degree equations satisfied by the application of the first-derivative operator twice. Applying an order $p$ SBP operator for the first derivative twice results in 
\begin{equation}\label{accapp2}
\begin{array}{lcr}
\mat{D}_{1}^{(p)}\textrm{diag}\left(\mb{x}^{k}\right)\mat{D}_{1}^{(p)}\mb{x}^{s}=s\mat{D}_{1}^{(p)}\mb{x}^{s+k-1}&=&s(s+k-1)\mb{x}^{s+k-2},\\\\
&&s+k\leq p+1,\;s\leq p.
\end{array}
\end{equation}
Equations (\ref{accapp2}) show that only the equations for $s=p+1,\; k=0$  are not satisfied by the application of the first-derivative operator twice. 
We now use the observation that the application of the first-derivative operator twice already satisfies a number of the degree equations to propose a construction of order-matched GSBP operators for the second derivative with variable coefficients as the application of the first-derivative operator twice plus a corrective term that is modelled after the constant-coefficient operator.
\begin{theorem}\label{CorollaryTheorem2}
The existence of a diagonal-norm compatible and order-matched GSBP operator $\mat{D}_{2}^{(p)}$ of order $p$ and degree $p+1$ is sufficient for the existence of a compatible and order-matched GSBP operator $\mat{D}_{2}^{(p)}\left(\mat{B}\right)$, for $p+1\leq N-1$ and $N\ge3$.  
\end{theorem}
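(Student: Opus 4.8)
The plan is to construct $\mat{D}_2^{(p)}(\mat{B})$ explicitly from the known constant-coefficient operator $\mat{D}_2^{(p)}$ by mimicking the variable-coefficient structure at the level of the individual building-block matrices. By hypothesis we have a diagonal-norm compatible order-matched $\mat{D}_2^{(p)} = \mat{H}^{-1}\left\{-\mat{M} + \mat{E}\mat{D}_b^{(\ge p+1)}\right\}$ with $\mat{M} = \left(\mat{D}_1^{(p)}\right)^T\mat{H}\mat{D}_1^{(p)} + \mat{R}$, where $\mat{M}$ and $\mat{R}$ are symmetric positive semi-definite. The first step is to define a candidate operator of the required form by setting, for each $i$, $\mat{M}_i$ and $\mat{R}_i$ to be appropriately scaled rank-structured pieces so that $\sum_i \mat{B}(i,i)\mat{M}_i$ reduces to $\mat{M}$ when $\mat{B}=\mat{I}$, and then building $\mat{D}_2^{(p)}(\mat{B}) = \mat{H}^{-1}\left\{-\mat{M}(\mat{B}) + \mat{E}\mat{B}\mat{D}_b^{(\ge p+1)}\right\}$ with $\mat{M}(\mat{B}) = \left(\mat{D}_1^{(p)}\right)^T\mat{H}\mat{B}\mat{D}_1^{(p)} + \mat{R}(\mat{B})$. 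The natural first guess is $\mat{R}_i = \mat{R}$ weighted somehow, but that is too crude; instead I expect the right construction to take $\mat{M}_i$ from the constant-coefficient $\mat{M}$ localized to node $i$ — this is exactly the Kamakoti–Pantano-style decomposition alluded to after Definition \ref{DefD2}, and the point is that for the repeating-interior portion such a decomposition is classical, while near the boundary one absorbs whatever is left into the $\mat{R}_i$.

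The second step is to verify the two semi-definiteness requirements: each $\mat{M}_i \succeq 0$ and each $\mat{R}_i \succeq 0$. Here the key observation is (\ref{accapp2}): the application of $\mat{D}_1^{(p)}$ twice already produces the variable-coefficient term $\left(\mat{D}_1^{(p)}\right)^T\mat{H}\mat{B}\mat{D}_1^{(p)}$, which is automatically positive semi-definite whenever $\mat{B}\succeq 0$ (i.e.\ when $\fnc{B}\ge 0$, the physically relevant case), independent of any construction. So the burden is entirely on $\mat{R}(\mat{B}) = \sum_i \mat{B}(i,i)\mat{R}_i$: I need each $\mat{R}_i \succeq 0$, after which $\mat{R}(\mat{B})\succeq 0$ follows for $\mat{B}\succeq 0$. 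The cleanest route is to show the constant-coefficient remainder $\mat{R}$ itself admits a decomposition $\mat{R} = \sum_i \mat{R}_i$ with each $\mat{R}_i\succeq 0$; since $\mat{R}\succeq 0$ it has a factorization $\mat{R} = \mat{W}^T\mat{W}$, and one can try $\mat{R}_i = \mat{W}^T \mat{\Pi}_i \mat{W}$ for suitable diagonal $0\preceq \mat{\Pi}_i \preceq \mat{I}$ summing to $\mat{I}$ — or, more in the spirit of the paper, read off the $\mat{R}_i$ directly from the block structure of $\mat{R}$, which for these operators is supported only on a bounded number of boundary rows/columns.

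The third step is to check the degree (accuracy) equations $\mat{D}_2^{(p)}(\textrm{diag}(\mb{x}^k))\mb{x}^s = s(k+s-1)\mb{x}^{k+s-2}$ for all $k+s\le p+1$. This splits into two cases exactly as in the discussion preceding the theorem. For $s\le p$ the application of the first-derivative operator twice already delivers the correct answer by (\ref{accapp2}), so one must check that the corrective term — namely $\mat{H}^{-1}\left\{-\mat{R}(\textrm{diag}(\mb{x}^k)) + \mat{E}\,\textrm{diag}(\mb{x}^k)\,\mat{D}_b^{(\ge p+1)}\right\}$ — annihilates these lower-degree monomials; this should follow because the constant-coefficient $\mat{D}_2^{(p)}$ being order $p$ forces the corresponding constant-coefficient correction to vanish on $\mb{x}^s$ for $s\le p$, and the variable-coefficient correction is built to specialize to it. The one remaining equation is $s = p+1$, $k=0$, i.e.\ $\mat{D}_2^{(p)}(\mat{I})\mb{x}^{p+1} = (p+1)p\,\mb{x}^{p-1}$, which is precisely the statement that the constant-coefficient operator $\mat{D}_2^{(p)}$ is order $p$ — true by hypothesis. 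The condition $p+1\le N-1$, $N\ge 3$ is what guarantees (via Lemma \ref{MaxdegD2} and Corollary \ref{MaxdegD2B}) that these equations are consistent and that there is enough room in $\mathbb{R}^{N\times N}$ to realize $\mat{D}_b^{(\ge p+1)}$ and the decomposition.

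The main obstacle I anticipate is the second step — producing the $\mat{M}_i$ (equivalently the $\mat{R}_i$) with the required positive semi-definiteness while simultaneously having $\sum_i \mat{B}(i,i)\mat{M}_i$ satisfy the accuracy equations for \emph{every} monomial coefficient $\fnc{B}=x^k$, not just $\fnc{B}\equiv 1$. Merely decomposing a fixed PSD matrix $\mat{R}$ into a PSD sum is easy; the constraint that the \emph{weighted} sum be accurate for all admissible $k$ is what couples the pieces, and I expect the resolution to hinge on the explicit near-boundary block form of $\mat{M}$ together with the fact that the interior stencil genuinely decomposes in the Kamakoti–Pantano manner, so that the accuracy equations for variable $k$ reduce, node by node, to the already-satisfied constant-coefficient equations. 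Establishing that reduction cleanly — rather than by a brute-force linear-algebra count — is the crux of the argument.
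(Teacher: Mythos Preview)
Your three-step outline is sound and matches the paper's logical structure, but you are making the construction much harder than it needs to be, and the ``main obstacle'' you anticipate is illusory.

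The paper does not localize $\mat{R}$ node-by-node at all. It simply takes
\[
\mat{R}_i \;=\; \tfrac{1}{N}\,\mat{R}_c \quad\text{for every } i,
\qquad\text{so}\qquad
\mat{R}(\mat{B}) \;=\; \frac{\sum_{i=1}^{N} b_i}{N}\,\mat{R}_c,
\]
where $\mat{R}_c$ is the single correction matrix from the constant-coefficient operator. Positive semi-definiteness of each $\mat{R}_i$ is then immediate from $\mat{R}_c\succeq 0$. The accuracy check goes exactly as you sketch in your step~3, but the crucial point---which dissolves your ``coupling'' worry---is that the constant-coefficient hypothesis forces $\mat{R}_c\,\mb{x}^s = 0$ and $\mat{A}\,\mb{x}^s = 0$ for every $s\le p$ (here $\mat{A}=\mat{D}_b^{(\ge p+1)}-\mat{D}_1^{(p)}$). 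Consequently the corrective term $-\tfrac{\sum b_i}{N}\mat{R}_c + \mat{E}\mat{B}\mat{A}$ annihilates $\mb{x}^s$ for $s\le p$ \emph{regardless of what $\mat{B}$ is}, so the variable-coefficient degree equations for $s\le p$ are inherited for free from $\mat{D}_1^{(p)}\mat{B}\mat{D}_1^{(p)}$. The only remaining equation is $(k,s)=(0,p+1)$, where $\mat{B}=\mat{I}$ and the operator literally collapses to the constant-coefficient one.

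Your factorization $\mat{R}_i = \mat{W}^T\mat{\Pi}_i\mat{W}$ would in fact also work (since $\mat{R}_c\mb{x}^s=0$ with $\mat{R}_c=\mat{W}^T\mat{W}$ gives $\mat{W}\mb{x}^s=0$, hence $\mat{R}_i\mb{x}^s=0$), so your route is not wrong---but it obscures the mechanism. The accuracy equations do \emph{not} couple the $\mat{R}_i$; any PSD decomposition of $\mat{R}_c$ suffices, and the paper picks the trivial one.
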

\begin{proof}
Consider constructing the operator as 
\begin{equation}\label{FormD2B}
\mat{H}^{-1}\left[-\left(\mat{D}_{1}^{(p)}\right)^{T}\mat{H}\mat{B}\mat{D}_{1}^{(p)}-\frac{\sum_{i=1}^{n}b_{i}}{n}\mat{R}_{c}+\mat{E}\mat{B}\mat{D}_{b}^{(\ge p+1)}\right],
\end{equation}
where $\mat{R}_{c}$ and $\mat{D}_{b}^{(\ge p+1)}$ are from the constant-coefficient operator. As has been argued, the additional equations that must be satisfied are for $(k,s)=(0,p+1)$. Since (\ref{FormD2B}) collapses onto the constant-coefficient operator for this condition, it automatically satisfies these additional equations. What remains to be shown is that the remaining degree equations are still satisfied.

Now $\mat{D}_{b}^{(\ge p+1)}=\mat{D}_{1}^{(p)}+\mat{A}$, where $\mat{A}$ is a corrective term such that $\mat{D}_{b}^{(\ge p+1)}$ is at least one order more accurate than the first-derivative operator. The application of the first-derivative operator twice can decomposed into
\begin{equation}\label{FormD2B2}
\mat{D}_{1}^{(p)}\mat{B}\mat{D}_{1}^{(p)}=\mat{H}^{-1}\left[-\left(\mat{D}_{1}^{(p)}\right)^{T}\mat{H}\mat{B}\mat{D}_{1}^{(p)}+\mat{E}\mat{B}\mat{D}_{1}^{(p)}\right].
\end{equation}
Therefore (\ref{FormD2B}) can be recast as
\begin{equation}\label{FormD2B3}
\mat{D}_{1}^{(p)}\mat{B}\mat{D}_{1}^{(p)}+\mat{H}^{-1}\left\{-\frac{\sum_{i=1}^{n}b_{i}}{n}\mat{R}_{c}+\mat{E}\mat{B}\mat{A}\right\}.
\end{equation}

\noindent Examining the constant-coefficient version of (\ref{FormD2B}), it can be seen that both $\mat{A}$ and $\mat{R}_{c}$ must be be zero for $\mb{x}^{s}$ for $s\leq p$. Therefore, we have proven that (\ref{FormD2B}) leads to a compatible and order-matched GSBP operator for the second derivative with variable coefficients. 
\end{proof}
%
%

The implication of Theorem \ref{CorollaryTheorem2} is that the search for order-matched GSBP operators for the variable-coefficient case reduces to the search for order-matched GSBP operators for the constant-coefficient case. This substantially simplifies both the proof that order-matched GSBP operators exist for a given nodal distribution and their construction. Consider trying to solve the non-linear equations for a $13$-node GSBP operator; for such an operator, the eigenvalue problem of $13$ matrices, of size $13\times 13$, must be solved. In practice, one solves the constant-coefficient problem and if such operators exist, Theorem (\ref{CorollaryTheorem2}) guarantees that order-matched GSBP operators exist for the variable-coefficient case. Moreover, Theorem (\ref{CorollaryTheorem2}) gives a simple means of constructing order-matched GSBP operators from the constant-coefficient order-matched GSBP operators. The implications of this will be further discussed in Section \ref{Construction of SBP operators for the second derivative}.
\subsection{GSBP operators with a repeating interior stencil}\label{Generalized SBP operators for the second derivative: Theory for SBP operators with a repeating interior stencil}
The focus of this section is on compatible order-matched operators with a repeating interior stencil. The repeating interior stencil requires satisfying additional constraints and thus further specifies the form of $\mat{R}\left(\mat{B}\right)$. Here we present two versions: one based on ideas in \cite{Mattsson2004b,Mattsson2008, Mattsson2012,Fernandez2012,Fernandez2013,Fernandez2014} and another based on a simplification of the ideas of Kamokoti and Pantano \cite{Kamakoti2009} that allows for a simple extension to operators that include nodes near and potentially at boundaries. The second form of the operator is not only convenient for analysis, but also from an implementation standpoint. It reduces the application of the operator to one loop. This form is also advantageous for the construction of implicit methods that require the linearization of the order-matched GSBP operator, since the linearization is completely transparent. Moreover, it is a convenient formalism for presenting particular instances of operators. The first form, which corresponds to classical minimum-stencil FD-SBP operators, is given as
\begin{equation}\label{SBPD2Form}
\begin{array}{rcl}
        	\mat{D}_{2}^{(2p,p)} (\mat{B})&=&\mat{H}^{-1}\left[-\left(\mat{D}_{1}^{(2p,p)}\right)^{T}\mat{H}\mat{B}\mat{D}_{1}^{(2p,p)}+\mat{E}\mat{B}\mat{D}_{1}^{(:,\ge p+1,:)}\right]\\\\
	&&-\frac{1}{h}\mat{H}^{-1}\sum_{i=p+1}^{2p}\alpha^{(p)}_{i}\left(\mat{\tilde{D}}_{i,p}^{(2,1,:)}\right)^{T}\mat{C}_{i}^{(p)}\mat{B}\mat{\tilde{D}}_{i,p}^{(2,1,:)}.
	\end{array}
\end{equation}
The $\mat{\tilde{D}}_{i,p}^{(2,1,:)}$ operators have an interior stencil that spans $2p+1$ nodes, while biased stencils at the first $2p$ and last $2p$ nodes use $3p$ nodes starting at either boundary. The interior stencil is a second-order centered-difference approximation to the $i^{\textrm{th}}$ derivative, while the biased stencils are first-order accurate. The tilde notation denotes an undivided difference approximation. Constructed as such, the corrective term, multiplied by $\mat{H}$, is guaranteed to be negative semi-definite, as long as the $\mat{C}_{i}^{(p)}$, which are diagonal matrices of the form $\mat{C}_{i}^{(p)} = \diag\left(c_{i,1}^{(p)},\dots,c_{i,2p}^{(p)},1,\dots,1,c_{i,2p}^{(p)},\dots,c_{i,1}^{(p)}\right)$, are positive semi-definite. The operator $\mat{D}_{1}^{(:,\ge p+1)}$ is an approximation to the first derivative of at least order $p+1$. The $\alpha^{(p)}_{i}$ coefficients are
\begin{itemize}
\item $p=1$: $\alpha_{2}^{(1)}=\frac{1}{4}$; 
\item $p=2$: $\alpha_{3}^{(2)}=\frac{1}{18}$, $\alpha_{4}^{(2)}=\frac{1}{48}$;
\item $p=3$: $\alpha_{4}^{(3)}=\frac{1}{80}$, $\alpha_{5}^{(3)}=\frac{1}{100}$, $\alpha_{6}^{(3)}=\frac{1}{720}$; and
\item $p=4$: $\alpha_{5}^{(4)}=\frac{1}{350}$, $\alpha_{6}^{(4)}=\frac{1}{252}$, $\alpha_{7}^{(4)}=\frac{1}{980}$, $\alpha_{8}^{(4)}=\frac{1}{11200}$.
\end{itemize}
We note that for $p>4$, the first-derivative operator requires more than $2p$ nodes that do not have the interior stencil (see \cite{Albin2014}), so the above definitions would need to be changed accordingly. The form of the various constituent matrices is different than that proposed by Mattsson \cite{Mattsson2012}. Regardless, both formulations lead to the same interior stencil and the analysis in this section applies to both formulations. From (\ref{SBPD2Form}) it can be seen that $\mat{R}\left(\mat{B}\right)$ corresponding to Definition (\ref{DefD2C}) is given as
\begin{equation}\label{classicalR}
\mat{R}\left(\mat{B}\right)=-\frac{1}{h}\sum_{i=p+1}^{2p}\alpha^{(p)}_{i}\left(\mat{\tilde{D}}_{i,p}^{(2,1,:)}\right)^{T}\mat{C}_{i}^{(p)}\mat{B}\mat{\tilde{D}}_{i,p}^{(2,1,:)}.
\end{equation}
With form (\ref{classicalR}) and the $\alpha_{i}^{(p)}$s specified above, the interior stencil of $\mat{D}_{2}^{(2p,p,:)}(\mat{B})$ is fully specified. As is discussed in Section \ref{Construction of SBP operators for the second derivative}, this form can be used to construct a subset of order-matched GSBP operators with a repeating interior stencil.

The second form is given as 
\begin{equation}\label{D2ALT}
\mat{D}_{2}^{(2p,p)}\mat{H}(\mat{B}) = \mat{H}^{-1}\left[\sum_{i=1}^{N}\mat{B}\left(i,i\right)\left(\mat{M}_{\mat{D},i}+\mat{R}_{i}\right)+\mat{E}\mat{B}\mat{D}_{1}^{(\ge p+1)}\right],
\end{equation}
where
\begin{equation}
\left(\mat{D}_{1}^{(2p,p)}\right)^{T}\mat{H}\mat{B}\mat{D}_{1}^{(2p,p)} = \sum_{i=1}^{N}\mat{B}\left(i,i\right)\mat{M}_{\mat{D},i},
\end{equation}
and all of the matrices are $\in\mathbb{R}^{N\times N}$. Clearly, the first form can be recast in the second form. Here we concentrate on GSBP operators that have the same internal stencil as (\ref{SBPD2Form}); however, form (\ref{D2ALT}) easily allows contemplating other types of internal stencils. Form (\ref{D2ALT}) can be further collapsed by retaining only the nonzero blocks, such that the operators, applied to a vector $\mb{u}$, can be constructed as
\begin{equation}\label{D2ALT2}
\begin{array}{rcl}
\mat{D}_{2}\left(\mat{B}\right)^{(2p,p)}\mb{u} &=& \sum_{i=1}^{g}\mat{B}(i,i)\mat{M}_{i}\mb{u}_{i}+\mat{P}_{i}\mat{M}_{i}\mat{P}_{i}\mb{u}_{n-i+1}\\\\
&&+ \sum_{i=g+1}^{N-g}\mat{B}(i,i)\mat{M}_{INT}\mb{u}(i-p:i+p).
\end{array}
\end{equation}
The organization of (\ref{D2ALT2}) is based on the variable coefficients rather than the vector $\mb{u}$. The matrices $\mat{M}_{i}$, $i\in[1,g]$ originate from the contributions of the biased stencils near and at the boundary. The matrix $\mat{M}_{INT}$ represents the contributions from the interior stencil and is of size $\left(2p+1\right)\times\left(2p+1\right)$. The notation $\mb{u}_{i}$ is to denote that a particular portion of the vector $\mb{u}$ is being used, while $\mb{u}(i-p:i+p)$ is to be taken literally. We have also taken advantage of the fact that the operators we are interested in have biased stencils that are the same under a change in the direction of the axis. Therefore, the $\mat{M}$ associated with nodes near and at the right boundary are the permutation of the rows and columns of the same matrices from the left boundary. The operation of taking the permutation of the rows and columns of a matrix is easily performed by pre- and post-multiplying by a permutation matrix with ones along its anti-diagonal, denoted here by the matrices $\mat{P_{i}}$.

We discuss the difficulties presented by (\ref{D2ALT2}) in deriving compatible and order-matched GSBP and classical minimum-stencil FD-SBP operators in Section \ref{ConstructionRepeat}. Here we are interested in the internal stencil. First, some properties of $\mat{M}_{INT}$ from (\ref{SBPD2Form}) are summarized in the following proposition:
\begin{proposition}\label{CharMINT}
The matrix $\mat{M}_{INT}$ of a minimum-stencil compatible and order-matched GSBP operator, $\mat{D}_{2}\left(\mat{B}\right)^{(2p,p)}$, constructed from (\ref{SBPD2Form}), has the following properties:
\begin{itemize}
	\item it is of size $\left(2p+1\right)\times\left(2p+1\right)$;
	\item the $j^{\textrm{th}}$ coefficient of the internal stencil is given as the sum of the $j^{\textrm{th}}$ off diagonal multiplied by the corresponding variable coefficient, with the convention that the main diagonal is zero and those to the right are enumerated using positive numbers and those to the left are enumerated using negative numbers;
	\item it is bisymmetric;
	\item the entries in the upper right-hand triangle with corner entries $(1,2p+1)$, $(1,p+2)$, and $(2p+1,p)$, and the associated lower left-hand triangle, have entries that are zero; and
	\item all rows and columns sum to zero.
\end{itemize}
\end{proposition}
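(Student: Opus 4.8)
The plan is to localize the argument to the interior of the operator. Because $\mat{E}=\mat{E}_{x_{R}}-\mat{E}_{x_{L}}$ acts only at the boundary nodes, the term $\mat{E}\mat{B}\mat{D}_{1}^{(:,\ge p+1,:)}$ in (\ref{SBPD2Form}) does not influence the interior stencil, and on the interior $\mat{H}$ reduces to $h\mat{I}$ and each $\mat{C}_{i}^{(p)}$ to $\mat{I}$. Let $\mb{v}$ denote the length-$(2p+1)$ interior row stencil of $\mat{D}_{1}^{(2p,p)}$ (the order-$2p$ centered first difference, normalized so that an interior row of $\mat{D}_{1}^{(2p,p)}$ is $h^{-1}\mb{v}^{T}$) and $\mb{w}_{i}$ the interior row stencil of $\mat{\tilde{D}}_{i,p}^{(2,1,:)}$; then the interior block of the per-coefficient decomposition is
\[
\mat{M}_{INT}=-\frac{1}{h}\left(\mb{v}\mb{v}^{T}+\sum_{i=p+1}^{2p}\alpha_{i}^{(p)}\,\mb{w}_{i}\mb{w}_{i}^{T}\right),
\]
a fixed linear combination of the symmetric rank-one matrices built from the centered-difference stencils (the overall factor is fixed by (\ref{SBPD2Form}) and plays no role below). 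Every property in the proposition is then read off from this representation.

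Size (property 1) and bisymmetry (property 3) follow at once. The stencils $\mb{v}$ and all $\mb{w}_{i}$ are supported on the same $2p+1$ consecutive nodes and share the same midpoint, so each outer product --- hence $\mat{M}_{INT}$ --- is $(2p+1)\times(2p+1)$. Each $\mb{v}\mb{v}^{T}$ and $\mb{w}_{i}\mb{w}_{i}^{T}$ is symmetric, and a centered difference stencil is, about its midpoint, either symmetric or antisymmetric; writing $s_{-m}=\pm s_{m}$ for such a stencil, $(\mb{s}\mb{s}^{T})_{-a,-b}=(\pm s_{a})(\pm s_{b})=s_{a}s_{b}=(\mb{s}\mb{s}^{T})_{a,b}$, so each rank-one term --- and hence $\mat{M}_{INT}$ --- is invariant under simultaneous reversal of the row and column orderings, i.e.\ bisymmetric. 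For property 5, let $\mb{1}$ be the all-ones vector of length $2p+1$. A centered approximation to any derivative of order $\ge 1$ annihilates constants, so $\mb{v}^{T}\mb{1}=0$ and $\mb{w}_{i}^{T}\mb{1}=0$ for all $i$, whence $\mat{M}_{INT}\mb{1}=0$; thus every row sums to zero, and, by symmetry (or by the same computation applied on the left), so does every column.

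For property 2 I would expand the interior sum in (\ref{D2ALT2}), indexing the rows and columns of $\mat{M}_{INT}$ symmetrically from $-p$ to $p$. At an interior output node $m$, the summand of (\ref{D2ALT2}) carrying the coefficient $\mat{B}(m-a,m-a)$ is the block centered at $m-a$, and it contributes $\sum_{n}[\mat{M}_{INT}]_{a,n}\,u_{m-a+n}$ to node $m$; hence the total coefficient of $u_{m+j}$ in the interior stencil is $\sum_{a}[\mat{M}_{INT}]_{a,a+j}\,\mat{B}(m-a,m-a)$. As $a$ runs over its admissible range the numbers $[\mat{M}_{INT}]_{a,a+j}$ are exactly the entries of the $j^{\textrm{th}}$ off-diagonal of $\mat{M}_{INT}$ --- with $j>0$ labeling the diagonals to the right of the main diagonal, $j<0$ those to the left, and $j=0$ the main diagonal itself --- each weighted by the corresponding variable coefficient. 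This is precisely property 2, and it also exhibits the interior stencil as independent of $m$.

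Property 4 is the crux and the main obstacle. The stencils $\mb{w}_{i}$ and the coefficients $\alpha_{i}^{(p)}$ in (\ref{SBPD2Form}) are chosen precisely so that the assembled interior operator collapses to the minimum stencil: at an interior node it involves only $u_{m-p},\dots,u_{m+p}$ (and only the coefficients $\mat{B}(m-p,m-p),\dots,\mat{B}(m+p,m+p)$), which is the defining feature of classical minimum-stencil FD-SBP second-derivative operators \cite{Mattsson2004b,Mattsson2008,Mattsson2012}. Granting this, the formula of property 2 shows that for $|j|\ge p+1$ the quantity $\sum_{a}[\mat{M}_{INT}]_{a,a+j}\,\mat{B}(m-a,m-a)$ vanishes identically in $\fnc{B}$; since the values of $\fnc{B}$ at distinct nodes are independent, every entry $[\mat{M}_{INT}]_{a,a+j}$ with $|j|\ge p+1$ is zero, and unwinding the index convention these are exactly the entries described in property 4 (the upper-right triangle with corners $(1,2p+1)$, $(1,p+2)$ and the mirror lower-left triangle with corner $(2p+1,p)$). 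The only non-routine ingredient is the minimum-stencil fact itself, i.e.\ that the tabulated $\alpha_{i}^{(p)}$ (with the stated $\mb{w}_{i}$) really do annihilate all off-diagonals of index $\ge p+1$ in $\mb{v}\mb{v}^{T}+\sum_{i}\alpha_{i}^{(p)}\mb{w}_{i}\mb{w}_{i}^{T}$; I would dispose of it either by citing \cite{Mattsson2008,Mattsson2012} or by a direct check for each $p$, noting that the $\pm$ reversal symmetry of the stencils collapses the off-diagonals of index $p+1,\dots,2p$ into just $p$ scalar conditions matching the $p$ free coefficients $\alpha_{p+1}^{(p)},\dots,\alpha_{2p}^{(p)}$.
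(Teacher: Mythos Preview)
Your approach is correct and is essentially what the paper has in mind: the paper does not give a formal proof of this proposition at all, stating only that ``the properties listed in Proposition \ref{CharMINT} can be easily observed by expanding (\ref{SBPD2Form})'' and then illustrating with the $p=2$ example. Your write-up is precisely that expansion made explicit, via the rank-one representation $\mat{M}_{INT}\propto\mb{v}\mb{v}^{T}+\sum_{i}\alpha_{i}^{(p)}\mb{w}_{i}\mb{w}_{i}^{T}$, and the deductions for size, bisymmetry, zero row/column sums, and the off-diagonal reading of the stencil are all sound.

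On property 4 you correctly isolate the only substantive step: the vanishing of the off-diagonals with index $|j|\ge p+1$ is equivalent to the interior stencil having width $2p+1$ in $\mb{u}$, and your argument that independence of the nodal values of $\fnc{B}$ forces each individual entry on those off-diagonals to vanish is exactly right. The paper simply takes the minimum-stencil property as part of the construction (the $\alpha_{i}^{(p)}$ listed after (\ref{SBPD2Form}) are chosen for this purpose), so citing \cite{Mattsson2008,Mattsson2012} or a direct check per $p$ is the appropriate way to close it, as you propose. There is no gap.
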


The properties listed in Proposition \ref{CharMINT} can be easily observed by expanding (\ref{SBPD2Form}). As an example, consider $\mat{M}_{INT}$ for the classical minimum-stencil FD-SBP operator with $p=2$, which using (\ref{SBPD2Form}), is 
\begin{equation}
\mat{M}_{INT}=\left[ \begin {array}{ccccc} -\frac{1}{24}&\frac{1}{6}&-\frac{1}{8}&0&0\\ \noalign{\medskip}\frac{1}
{6}&-\frac{5}{6}&\frac{1}{2}&\frac{1}{6}&0\\ \noalign{\medskip}-\frac{1}{8}&\frac{1}{2}&-\frac{3}{4}&\frac{1}{2}&-\frac{1}{8}
\\ \noalign{\medskip}0&\frac{1}{6}&\frac{1}{2}&-\frac{5}{6}&\frac{1}{6}\\ \noalign{\medskip}0&0&-\frac{1}{8}&\frac{1}
{6}&-\frac{1}{24}\end {array} \right], 
\end{equation}
while the application of the first-derivative operator twice gives
\begin{equation}
\mat{M}_{INT}=\left[ \begin {array}{ccccc} -{\frac {1}{144}}&\frac{1}{18}&0&-\frac{1}{18}&{\frac {1
}{144}}\\ \noalign{\medskip}\frac{1}{18}&-\frac{4}{9}&0&\frac{4}{9}&-\frac{1}{18}\\ \noalign{\medskip}0
&0&0&0&0\\ \noalign{\medskip}-\frac{1}{18}&\frac{4}{9}&0&-\frac{4}{9}&\frac{1}{18}
\\ \noalign{\medskip}{\frac {1}{144}}&-\frac{1}{18}&0&\frac{1}{18}&-{\frac {1}{144}}
\end {array} \right].
\end{equation}

For classical FD-SBP operators approximating the second derivative with constant coefficients, one of the motivations for minimum-stencil operators \cite{Mattsson2004b,Mattsson2008} is that besides increasing the degree of the operator, the resulting interior stencil has smaller bandwidth than the application of the first-derivative operator twice. For example, taking $p=2$, the application of the first-derivative operator twice has an interior operator
\begin{equation}
\frac{1}{h^2}\left[ \begin {array}{ccccccccc} {\frac {1}{144}}&-\frac{1}{9}&\frac{4}{9}&\frac{1}{9}&-{
\frac {65}{72}}&\frac{1}{9}&\frac{4}{9}&-\frac{1}{9}&{\frac {1}{144}}\end {array} \right],
\end{equation}
and in general has $4p+1$ coefficients. On the other hand, the minimum-stencil operator has an interior operator given as
\begin{equation}
\frac{1}{h^2}\left[ \begin {array}{ccccc} -\frac{1}{12}&\frac{4}{3}&-\frac{5}{2}&\frac{4}{3}&-\frac{1}{12}\end {array}
 \right], 
\end{equation}
and in general has $2p+1$ coefficients. Thus, for constant-coefficient operators, the minimum-stencil requires fewer floating-point operations for interior nodes, as compared to the application of the first-derivative operator twice. 

The form of compatible classical FD-SBP operators for the second derivative with constant coefficients motivated the construction of the variable-coefficient operator. Thus, the stencil width has the same properties as the constant-coefficient case; that is, the application of the first-derivative operator twice uses $4p+1$ nodes, while the minimum-stencil operator uses $2p+1$ nodes. However, the number of nodes hides a paradoxical fact: for $p<4$, the interior stencil of the compatible classical minimum-stencil FD-SBP operator, $\mat{D}_{2}^{(p)}\left(\mat{B}\right)$, requires a greater number of operations to construct as compared to the application of the first-derivative operator twice. Table \ref{Nonzero} summarizes the number of nonzero entries in $\mat{M}_{INT}$ and is, therefore, reflective of the number of floating-point operations necessary for constructing the interior stencil (the last row is constructed from the observable pattern in $p\in[1,4]$ and suggests that for $p>3$, the minimum-stencil operator has fewer nonzero entries). Nevertheless, for $p<3$, minimum-stencil operators are more accurate than the application of the first-derivative operator twice. Moreover, they can be advantageous when used for problems where the Jacobian matrix must be constructed, for example in optimization. For operators with a repeating interior stencil, this results from the fact that the minimum-stencil operators have smaller bandwidth than the application of the first-derivative operator twice, and therefore leads to Jacobian matrices that require less storage and fewer floating-point operations to invert---this issue does not apply to element-based operators.
\begin{table}[t!]
\caption{The number of nonzero entries in $\mat{M}_{INT}$}
\begin{center}
\begin{tabular}{c|c|c}
$p$&$\mat{D}_{2}(\mat{B})$&$\mat{D}_{1}\mat{B}\mat{D}_{1}$\\
\hline
1&7&4\\
2&19&16\\
3&37&36\\
4&61&64\\
p&$(2p+1)+2\sum_{i=1}^{p}p+i$&$4p^{2}$
\end{tabular}
\end{center}
\label{Nonzero}
\end{table}%

\section{Construction of GSBP operators for the second derivative}\label{Construction of SBP operators for the second derivative}
\subsection{Preliminaries}
For compatible order-matched GSBP operators it is necessary to first solve for the first-derivative GSBP operator, the degree equations for which are given by
\begin{equation}\label{accuracy2}
\begin{array}{lr}
\mat{Q}\mb{x}^{j}=j\mat{H}\mb{x}^{j-1},&j\in[0,p].
\end{array}
\end{equation}
The solution to the degree equations (\ref{accD2V}) and (\ref{accuracy2}) typically results in free parameters that must be specified. This naturally leads to the concept of optimization. The GSBP norm matrix is an approximation to the $L_{2}$ inner product \cite{DCDRF2014} and is used to compute the error in simulations, as well as functionals of the solution. Therefore, here we use the discrete GSBP $L_{2}$ inner product of the error as the objective function, which for the first-derivative operator, $\mat{D}_{1}^{(p)}$, is given as 
\begin{equation}\label{optimaD1D}
J_{p+1}= \mb{e}_{p+1}^{T}\mat{H}\mb{e}_{p+1},
 \end{equation}
 where the error vector is given as
 \begin{equation}
  \mb{e}_{p+1}=\mat{D}_{1}^{(p)}\mb{x}^{p+1}-\left(p+1\right)\mb{x}^{p}.
 \end{equation}
 For GSBP operators approximating the second derivative with variable coefficients, there are several error vectors, each of which is given by
\begin{equation}
\mb{e}_{k,s} = \mat{D}_{2}\left(\textrm{diag}\left(\mb{x}^{k}\right)\right)\mb{x}^{s}-s(s+k-1)\mb{x}^{s+k-2},
\end{equation}
and the objective function is constructed as
\begin{equation}\label{objD2}
J_{p+2}=\sum_{i=0}^{p+2}\left(\mb{e}_{i,p+2-i}\right)^{T}\mat{H}\mb{e}_{i,p+2-i}.
\end{equation}
\subsection{Finite-difference SBP and hybrid Gauss-trapezoidal operators}\label{ConstructionRepeat}
$\textrm{ }$

The generality of form (\ref{D2ALT}), and hence (\ref{D2ALT2}), results in a large system of nonlinear equations for the positive semi-definite requirement on $\mat{R}$. Without further simplification, Maple is unable to find solutions; how to facilitate solutions of the more general form is an ongoing area of research. Thus, in this paper we solve form (\ref{SBPD2Form}); however, form (\ref{D2ALT2}) is extremely convenient both for implementing the operators in a computer code, as well as presenting them.

In addition to classical FD-SBP operators, we also examine operators with a repeating interior stencil that have a number of nodes near the boundaries that are not equally spaced. This idea was first proposed by Mattsson et al.\ \cite{Mattsson2014}. These operators have some very attractive properties; the magnitude of the truncation terms at the boundaries scales as some power of the mesh spacing. By allowing the nodal spacing to vary near the boundaries, it is possible to reduce the magnitude of the error originating from the biased stencils near and at the boundaries \cite{Mattsson2014}. For diagonal-norm classical FD-SBP operators this is very attractive, as the order of the operator reduces by half at these nodes.

Deriving the optimal nodal locations beyond two or three nodes, while at the same time ensuring that a positive-definite norm matrix can be found,  becomes an increasingly difficult task \cite{Mattsson2014}. Alternatively, Del Rey Fern\'andez et al.\ \cite{DCDRF2014} have proven that the norm matrix of a GSBP operator is associated with a quadrature rule of certain degree. Therefore, the search for diagonal-norm GSBP operators reduces to the search for quadrature rules with positive weights. If a quadrature rule with the required properties exists, then it is possible to simply use the nodal distribution from that quadrature rule. Such quadrature rules, denoted hybrid Gauss-trapezoidal quadrature rules, were proposed by Alpert \cite{Alpert1999}. The nodal locations and quadrature weights are derived from the solution to

 \begin{equation}\label{Alpert}
 \sum_{i=1}^{j}\tilde{w}_{i}\tilde{x}_{i}^{r}=\frac{B_{r+1}\left(a\right)}{r+1},\quad r = 0,1,\dots,2j-2,
 \end{equation}
 where $B_{i}\left(x\right)$ is the $i^{\textrm{th}}$ Bernoulli polynomial and $B_{0}\left(x\right)=1$. The parameters $a$ and $j$ are chosen so that a particular degree is attained. If they are chosen such that $a=j$, then it is possible to show that the resultant quadrature rule is positive definite up to degree $20$ \cite{Alpert1999}, which is the approach taken here. In order to include the boundary nodes, the additional equation $\tilde{x}_{0}=0$ is added to the system (\ref{Alpert}). Therefore, it is possible to construct GSBP operators that do or do not include boundary nodes. To differentiate between the two possibilities, we denote hybrid Gauss-trapezoidal Lobatto as GSBP operators that include the boundary nodes and hybrid Gauss-trapezoidal as those that do not. To construct a nodal distribution on $x\in[0,1]$, the following relations are used:
\begin{equation}
\begin{array}{l}
x_{i}=h\tilde{x}_{i},\quad x_{N-(i-1)}=1-h\tilde{x}_{i},\quad i\in[0,j],\\\\
x_{i}=h(a+i),\quad i\in[0,n-1],
\end{array}
\end{equation} 
 where $h = \frac{1}{n+2a-1}$, $n$ is the number of uniformly distributed nodes, and the total number of nodes is given as $N = n+2j$. 
 
 We summarize the steps taken to construct both classical FD-SBP operators and the hybrid Gauss-trapezoidal-Lobatto and hybrid Gauss-trapezoidal operators:
\begin{itemize}
    \item solve the degree equations (\ref{accuracy2}) for the first-derivative GSBP operator;
    \item if there are free parameters, optimize using (\ref{optimaD1D});
    \item if any free parameters remain, set them to zero;
  	\item construct the GSBP operator for the second derivative using (\ref{SBPD2Form});
	\item the degree equations, (\ref{accD2V}), are formed and the first $2p$ are solved;
	\item typically, this results in families of solutions, each with free parameters;
	\item free parameters are specified through optimization, using the objective function (\ref{objD2}), with the constraint that the $\mat{C}$ matrices in (\ref{SBPD2Form}) are positive semi-definite; and then
	\item the remaining free parameters can be zeroed.
  \end{itemize}
  
  The form (\ref{SBPD2Form}) leads to nonlinear equations and hence multiple families of solutions. Each one of these families can be optimized with the constraint that the $\mat{C}$ matrices are positive semi-definite. Some of these families are more difficult to optimize than others, in particular for hybrid Gauss-trapezoidal-Lobatto and hybrid Gauss-trapezoidal operators. Here we take the path of least resistance and choose one family for each operator that is easily optimized by Maple. 
\subsection{Diagonal-norm GSBP operators on pseudo-spectral nodal distributions}
We derive a number of diagonal-norm GSBP operators on nodal distributions associated with pseudo-spectral methods. Definition \ref{DefD2C} requires the construction of $\mat{R}\left(\mat{B}\right)$. In the most general case, $\mat{R}\left(\mat{B}\right)$ can be constructed as
\begin{equation}\label{linearR}
\mat{R}(\mat{B})=\sum_{i=1}^{N}\mat{B}(i,i)\mat{R}_{i},
\end{equation}
with the restriction that $\mat{R}_{i}$ is symmetric positive semi-definite. This formulation leads to linear degree equations (\ref{accD2V}), but nonlinear constraints from $\mat{R}_{i}$ to be symmetric positive semi-definite. Alternatively, $\mat{R}_{i}$ is constructed to be symmetric positive semi-definite as follows:
\begin{equation}\label{nonlinearR}
\mat{R}_{i}=\mat{L}^{T}_{i}\mat{\Lambda}_{i}\mat{L}_{i},
\end{equation}
where $\mat{L}_{i}$ is lower unitriangular and $\mat{\Lambda}_{i}$ is a diagonal matrix. Now the constraint that $\mat{R}_{i}$ be symmetric positive semi-definite reduces to the constraint that $\mat{\Lambda}_{i}$ be positive semi-definite; however, the degree equations become nonlinear. Although (\ref{nonlinearR}) is guaranteed to result in compatible order-matched operators, if solutions can be found, the resultant system of equations is very difficult to solve, particularly for operators with many nodes. This motivates the search for simplifications of $\mat{R}\left(\mat{B}\right)$, as have been found for classical FD-SBP operators; this is a current area of research.

We seek a construction of $\mat{R}\left(\mat{B}\right)$ such that it is of the form (\ref{linearR}) and satisfies the requirement that $\mat{R}_{i}$ be symmetric positive semi-definite, but avoids solving a large system of nonlinear equations. We can do this by taking advantage of Theorem \ref{CorollaryTheorem2}. We first solve for the constant-coefficient order-matched GSBP operator for the second derivative, given by
\begin{equation}\label{constantD2}
\mat{D}_{2}^{(p)}\left(\mat{B}\right)=\mat{H}^{-1}\left[
-\left(\mat{D}_{1}^{(p)}\right)^{T}\mat{H}\mat{D}_{1}^{(p)}-\mat{R}_{c}+\mat{E}\mat{D}_{b}^{(\ge p+1)}
\right],
\end{equation}
which has degree equations
   \begin{equation}\label{accuracyD2C}
    \mat{D}_{2}^{(q)}\mb{x}^{k}=k(k-1)\mb{x}^{k-2},\quad j\in[0,p].
    \end{equation}
\noindent By Theorem \ref{CorollaryTheorem2}, if $\mat{R}_{c}$ is symmetric positive semi-definite, then a compatible order-matched GSBP operator is given by
\begin{equation}\label{Form2}
\mat{D}_{2}^{(p)}\left(\mat{B}\right)=\mat{H}^{-1}\left[
-\left(\mat{D}_{1}^{(p)}\right)^{T}\mat{H}\mat{B}\mat{D}_{1}^{(p)}-\sum_{i=1}^{n}\frac{\mat{B}(i,i)}{n}\mat{R}_{c}+\mat{E}\mat{B}\mat{D}_{b}^{(\ge p+1)}
\right].
\end{equation}

The general steps to construct order-matched GSBP operators are as follows:
\begin{itemize}
 \item solve the degree equations (\ref{accuracy2}) for the first-derivative GSBP operator;
    \item if there are free parameters, optimize using (\ref{optimaD1D});
    \item if any free parameters remain, set them to zero;
    \item solve the degree equations for the constant-coefficient second derivative (\ref{accuracyD2C});
    \item use free parameters to ensure that $\mat{R}_{c}$ is positive semi-definite; 
    \item if form (\ref{Form2}) is used with an $\mat{R}_{c}$ that is not positive semi-definite, it is necessary to check if the $\mat{M}_{i}$ are positive semi-definite; and then
    \item if there are free parameters, the operator is optimized using (\ref{objD2}), and any remaining free parameters are set to zero.
\end{itemize}

As examples, we construct operators on the following pseudo-spectral nodal distributions:
 \begin{itemize}
 	\item Newton-Cotes, i.e.\ equally spaced nodal distribution
	\item Chebyshev-Gauss
		 \begin{equation}
 			x_{k} = -\cos\left(\frac{\left(2k+1\right)\pi}{2\left(N-1\right)+2}\right),\quad k\in[0,N-1]
		 \end{equation}
	\item Chebyshev-Lobatto
		\begin{equation}
			x_{k}=-\cos\left(\frac{k\pi}{N-1}\right),\quad k\in[0,N-1]
		\end{equation}
 \end{itemize}
 Even though the first-derivative GSBP operators are constructed on pseudo-spectral nodal distributions, the operators that are obtained are not the classical pseudo-spectral operators associated with those nodal distributions, which have dense norms \cite{Shen2011}.

 \subsection{Summary of operators}
 Table \ref{ABBGSBP} lists the abbreviations used to refer to the various GSBP operators used in Section \ref{Numerical Results}. The degree and order of the various operators, on pseudo-spectral nodal distributions with $n$ nodes, for the application of the first-derivative operator twice are given as
 \begin{equation}
 \begin{array}{lr}
 \textrm{degree}=\lceil\frac{n}{2}\rceil,\textrm{ and}&\textrm{order}=\lceil\frac{n}{2}\rceil-1,
 \end{array}
 \end{equation}
where $\lceil\cdot\rceil$ is the ceiling operator which returns the closest integer larger than the argument, while for order-matched operators, the relationship is given as
  \begin{equation}
 \begin{array}{lr}
 \textrm{ degree}=\lceil\frac{n}{2}\rceil+1,\textrm{ and}&\textrm{order}=\lceil\frac{n}{2}\rceil.
 \end{array}
 \end{equation}
 \noindent For operators with a repeating interior stencil, the following relations hold for the application of the first-derivative operator twice:
 
 \begin{equation}
 \begin{array}{lr}
 \textrm{degree}=p,\textrm{ and}&\textrm{order}=p-1,
 \end{array}
 \end{equation}
 while for order-matched operators, the relationship is given as
  \begin{equation}
 \begin{array}{lr}
 \textrm{ degree}=p+1,\textrm{ and}&\textrm{order}=p.
 \end{array}
 \end{equation}
 \begin{table}[t!]
\caption{Abbreviations for GSBP operators}
\begin{center}
\begin{tabular}{l|l}
Abbreviation&Operator\\
\hline
$NC[n]1$&\parbox[t]{9cm}{Application of the first-derivative operator twice on $n$ Newton-Cotes quadrature nodes}\\
$NC[n]2$&\parbox[t]{9cm}{Compatible and order-matched operator on $n$ Newton-Cotes quadrature nodes constructed using form (\ref{Form2})}\\
$CGL[n]1$&\parbox[t]{9cm}{Application of the first-derivative operator twice on $n$ Chebyshev-Gauss-Lobatto quadrature nodes}\\
$CGL[n]2$&\parbox[t]{9cm}{Non-compatible and order-matched operator on $n$ Chebyshev-Gauss-Lobatto quadrature nodes constructed using form (\ref{constantD2}) for constant coefficients}\\
$CG[n]1$&\parbox[t]{9cm}{Application of the first-derivative operator twice on $n$ Chebyshev-Gauss quadrature nodes}\\
$CG[n]2$&\parbox[t]{9cm}{Compatible and order-matched operator on $n$ Chebyshev-Gauss quadrature nodes constructed using form (\ref{Form2})}\\
$CSBP[p]1$&\parbox[t]{9cm}{Application of the first-derivative operator twice using classical SBP operators}\\
$CSBP[p]2$&\parbox[t]{9cm}{Compatible and order-matched classical SBP operator constructed using form (\ref{SBPD2Form})}\\
$HGTLSBP[p]1$&\parbox[t]{9cm}{Application of the first-derivative operator twice on Hybrid Gauss-trapezoidal-Lobatto quadrature nodes}\\
$HGTLSBP[p]2$&\parbox[t]{9cm}{Compatible and order-matched operator on hybrid Gauss-trapezoidal-Lobatto quadrature nodes constructed using form (\ref{SBPD2Form})}\\
$HGTSBP[p]1$&\parbox[t]{9cm}{Application of the first-derivative operator twice on hybrid Gauss-trapezoidal quadrature nodes}\\
$HGTSBP[p]2$&\parbox[t]{9cm}{Compatible and order-matched operator on hybrid Gauss-trapezoidal quadrature nodes constructed using form (\ref{SBPD2Form})}
\end{tabular}
\end{center}
\label{ABBGSBP}
\end{table}%

\section{Numerical results}\label{Numerical Results}
In this section, various GSBP operators are characterized in the context of the steady linear convection-diffusion equation given as
\begin{equation}\label{LCD}
-\frac{\partial \fnc{U}}{\partial x}+\frac{\partial}{\partial x}\left(\fnc{B}\frac{\partial \fnc{U}}{\partial x}\right)+\fnc{S}=0,\; x\in[x_{L},x_{R}],\;\fnc{B}>0,
\end{equation}
with boundary conditions
\begin{equation}\label{BCICLCD}
\begin{array}{l}
\alpha_{x_{L}}\fnc{U}_{x_{L}}+\beta_{x_{L}}\fnc{B}_{x_{L}}\left.\frac{\partial\fnc{U}}{\partial x}\right|_{x_{L}}=\fnc{G}_{x_{L}},\textrm{ and}\\\\
\alpha_{x_{R}}\fnc{U}_{x_{R}}+\beta_{x_{R}}\fnc{B}_{x_{R}}\left.\frac{\partial\fnc{U}}{\partial x}\right|_{x_{R}}=\fnc{G}_{x_{R}}.
\end{array}
\end{equation}
The variable coefficient is given by
\begin{equation}\label{SourceN}
\fnc{B}=3+\sin\left(x\right)\mr{e}^{\left(-\frac{x^{2}}{10}\right)},
\end{equation}
and $\fnc{B}=1$ for the constant-coefficient equation. The source term $\fnc{S}$ and the functions $\fnc{G}_{x_{L}}$ and $\fnc{G}_{x_{R}}$ are chosen such that the solution to (\ref{LCD}) and (\ref{BCICLCD}) is
\begin{equation}\label{SolutionN}
\begin{array}{rcl}
\fnc{U}(x)&=&\mr{e}^{\left(-\frac{x^{2}}{10}\right)}\cos \left( \pi \,x \right)  \left( 1-x\right).
\end{array}
\end{equation} 

The semi-discrete equations for a single block or element are given as
\begin{equation}\label{SDLCD}
\begin{array}{rcl}
\frac{\mr{d}\mb{u}_{h}}{\mr{d}t}&=&-\mat{D}_{1}^{(p)}\mb{u}_{h}+\mat{H}^{-1}\left\{-\left(\mat{D}_{1}^{(p)}\right)^{T}\mat{H}\mat{B}\mat{D}_{1}^{(p)}-\mat{R}(\mat{B})+\mat{E}\mat{B}\mat{D}_{b}^{(\ge p+1)}\right\}\mb{u}_{h}\\\\
&&+\mb{SAT}_{x_{L}}+\mb{SAT}_{x_{R}}+\mb{S},
\end{array}
\end{equation}
where the additional two terms are the SATs to impose the boundary conditions, and $\mb{S}$ is the projection of the source term onto the nodes. The SATs are constructed to mimic the continuous boundary conditions (\ref{BCICLCD}) and have the form
\begin{equation}\label{SATs}
\begin{array}{l}
\mb{SAT}_{x_{L}}=\sigma_{x_{L}}\mat{H}^{-1}\mat{E}_{x_{L}}\left(\alpha_{x_{L}}\mb{u}_{h}+\beta_{x_{L}}\mat{B}\mat{D}_{b}^{(\ge p+1)}\mb{u}_{h}-\mb{1}\fnc{G}_{x_{L}}\right),
\\\\
\mb{SAT}_{x_{R}}=\sigma_{x_{R}}\mat{H}^{-1}\mat{E}_{x_{R}}\left(\alpha_{x_{R}}\mb{u}_{h}+\beta_{x_{R}}\mat{B}\mat{D}_{b}^{(\ge p+1)}\mb{u}_{h}-\mb{1}\fnc{G}_{x_{R}}\right),
\end{array}
\end{equation}
where $\mb{1}$ is a vector of ones. The terms within the parentheses are an approximation of the boundary conditions; the rest of the SAT is constructed to allow the energy method to be applied, with the additional parameters $\sigma$ chosen so that an energy  estimate analogous to the continuous estimate can be constructed (for more information about the SATs used in this section see \cite{Gong2011}).

The extension to a multi-element approach necessitates SATs for inter-element coupling, in addition to the boundary SATs (\ref{SATs}). Consider two abutting elements, with solution $\mb{u}_{h}$ in the left element and solution $\mb{v}_{h}$ in the right element. The SAT for the right boundary of the left element is \cite{Gong2011}
\begin{equation}\label{SATu}
\begin{array}{rcl}
\mb{SAT}_{\mb{u}_{h}}&=&
\sigma_{1}^{(\mb{u}_{h})}\mat{H}_{\mb{u}_{h}}^{-1}\left(\mat{E}_{\mb{u}_{h},x_{R}}\mb{u}_{h}-\mb{t}_{\mb{u}_{h},x_{R}}\mb{t}_{\mb{v}_{h},x_{L}}^{T}\mb{v}_{h}\right)\\\\
&&+\sigma_{2}^{(\mb{u}_{h})}\mat{H}_{\mb{u}_{h}}^{-1}\left(\mat{E}_{\mb{u}_{h},x_{R}}\mat{B}_{\mb{u}_{h}}\mat{D}_{b,\mb{u}_{h}}^{(\ge p+1)}\mb{u}_{h}-\mb{t}_{\mb{u}_{h},x_{R}}\mb{t}_{\mb{v}_{h},x_{L}}^{T}\mat{B}_{\mb{v}_{h}}\mat{D}_{b,\mb{v}_{h}}^{(\ge p+1)}\mb{v}_{h}\right)
\\\\
&&+\sigma_{3}^{(\mb{u}_{h})}\mat{H}_{\mb{u}_{h}}^{-1}\left(\mat{D}_{b,\mb{u}_{h}}^{(\ge p+1)}\right)^{T}\mat{B}_{\mb{u}_{h}}\left(\mat{E}_{\mb{u}_{h},x_{R}}\mb{u}_{h}-\mb{t}_{\mb{u}_{h},x_{R}}\mb{t}_{\mb{v}_{h},x_{L}}^{T}\mb{v}_{h}\right),
\end{array}
\end{equation}
where the subscripts $\mb{u}_{h}$ and $\mb{v}_{h}$ are used to identify operators for the left and right elements, and
\begin{equation}
\begin{array}{rcl}
\mat{Q}_{\mb{u}_{h}}+\mat{Q}_{\mb{u}_{h}}^{T}&=&\mat{E}_{\mb{u}_{h},x_{R}}-\mat{E}_{\mb{u}_{h},x_{L}}=\mb{t}_{\mb{u}_{h},x_{R}}\mb{t}_{\mb{u}_{h},x_{R}}^{T}-\mb{t}_{\mb{u}_{h},x_{L}}\mb{t}_{\mb{u}_{h},x_{L}}^{T},\\\\
\mat{Q}_{\mb{v}_{h}}+\mat{Q}_{\mb{v}_{h}}^{T}&=&\mat{E}_{\mb{v}_{h},x_{R}}-\mat{E}_{\mb{v}_{h},x_{L}}=\mb{t}_{\mb{v}_{h},x_{R}}\mb{t}_{\mb{v}_{h},x_{R}}^{T}-\mb{t}_{\mb{v}_{h},x_{L}}\mb{t}_{\mb{v}_{h},x_{L}}^{T}.
\end{array}
\end{equation}
The SAT for the right element is given as
\begin{equation}\label{SATv}
\begin{array}{rcl}
\mb{SAT}_{\mb{v}_{h}}&=&
\sigma_{1}^{(\mb{v}_{h})}\mat{H}_{\mb{v}_{h}}^{-1}\left(\mat{E}_{\mb{v}_{h},x_{L}}\mb{v}_{h}-\mb{t}_{\mb{v}_{h},x_{L}}\mb{t}_{\mb{u}_{h},x_{R}}^{T}\mb{u}_{h}\right)\\\\
&&+\sigma_{2}^{(\mb{v}_{h})}\mat{H}_{\mb{v}_{h}}^{-1}\left(\mat{E}_{\mb{v}_{h},x_{L}}\mat{B}_{\mb{v}_{h}}\mat{D}_{b,\mb{v}_{h}}^{(\ge p+1)}\mb{v}_{h}-\mb{t}_{\mb{v}_{h},x_{L}}\mb{t}_{\mb{u}_{h},x_{R}}^{T}\mat{B}_{\mb{u}_{h}}\mat{D}_{b,\mb{u}_{h}}^{(\ge p+1)}\mb{u}_{h}\right)
\\\\
&&+\sigma_{3}^{(\mb{v}_{h})}\mat{H}_{\mb{v}_{h}}^{-1}\left(\mat{D}_{b,\mb{v}_{h}}^{(\ge p+1)}\right)^{T}\mat{B}_{\mb{v}_{h}}\left(\mat{E}_{\mb{v}_{h},x_{L}}\mb{v}_{h}-\mb{t}_{\mb{v}_{h},x_{L}}\mb{t}_{\mb{u}_{h},x_{R}}^{T}\mb{u}_{h}\right).
\end{array}
\end{equation}

For the numerical studies in this paper, the following values for the boundary parameters and the SAT parameters, based on those in \cite{Gong2011}, are used:
\begin{equation}
\begin{array}{cccc}
\alpha_{x_{L}}=-1&\beta_{x_{L}}=1&\alpha_{x_{R}}=0&\beta_{x_{L}}=-1\\\\
\tau_{x_{L}}=1&\tau_{x_{R}}=1\\\\
\sigma_{1}^{(\mb{u})_{h}}=\frac{1}{2}&\sigma_{2}^{(\mb{u})_{h}}=1&\sigma_{3}^{(\mb{u})_{h}}=-2\\\\
\sigma_{1}^{(\mb{v})_{h}}=-\frac{1}{2}&\sigma_{2}^{(\mb{v})_{h}}=2&\sigma_{3}^{(\mb{v})_{h}}=-2.
\end{array}
\end{equation}
The solution error is defined by
\begin{equation}\label{solnerror}
\|\mb{e}\|_{\mat{H}} = \sqrt{\mb{e}^{T}\bar{\mat{H}}\mb{e}},
\end{equation}
where $\mb{e}=\left(\mb{u}_{h}-\mb{u}_{a}\right)$, $\mb{u}_{a}$ is the restriction of the analytical solution onto the grid, and $\bar{\mat{H}}$ is a block diagonal matrix with the norm matrix, $\mat{H}$, from each element along the diagonal.
\subsection{Constant-coefficient linear convection-diffusion equation}
Figures \ref{CGL_CONV_CONST} to \ref{CSBP_CONV_CONST} display the convergence of the various operators for the constant-coefficient problem, where operators with a repeating interior stencil have been implemented using a traditional approach. Convergence rates of the SBP norm of the error, calculated by (\ref{solnerror}), are given in Table \ref{tableConstant}, where the rates have been computed as the slope of the best line of fit through the highlighted points in the figures.

All of the order-matched operators have smaller global error than operators constructed as the application of the first-derivative operator twice. For operators on pseudo-spectral nodal distribution, most have rates of convergence equal to the order plus $2$, in line with what is expected for classical FD-SBP operators for solving parabolic problems \cite{Svard2006}. The exceptions are $CGL52$, $CG52$, $NC42$, and $NC61$, which have convergence rates between $0.5$ to $1$ less than expected.

For operators with a repeating interior stencil, the convergence rates are equal to or better than predicted, with the exception of $HGT31$, which is half an order worse than expected. For $p=2$, there does not appear to be any benefit for the derived order-matched hybrid Gauss-trapezoidal-Lobatto and hybrid Gauss-trapezoidal operators. On the other hand, for operators constructed as the application of the first-derivative operator twice the hybrid Gauss-trapezoidal-Lobatto and hybrid Gauss-trapezoidal operators have smaller global error compared to the classical FD-SBP operator, with the hybrid Gauss-trapezoidal operator having the smallest global error. For $p=3$ both the order-matched and the application of the first-derivative operator twice for hybrid Gauss-trapezoidal-Lobatto and hybrid Gauss-trapezoidal operators are better than the classical FD-SBP operators, with hybrid Gauss-trapezoidal having the smallest global error. Finally, for $p=4$, the hybrid Gauss-trapezoidal-Lobatto operator seems to show round-off error very early on (this was even worse for the hybrid Gauss-trapezoidal operator, for $p=4$, not shown here)---examining the coefficients of these operators, they are very large, and as a result, such operators are likely prone to round-off error. It appears that for these operators, in addition to the criteria used in this paper for optimization, it will be necessary to carefully monitor the size of the coefficients of the operators in order to reduce round-off error. On the other hand, the classical operators for $p=4$ face no such issues, with the compatible order-matched operator performing significantly better than the application of the first-derivative operator twice.

The numerical simulations show that order-matched GSBP as well as classical minimum-stencil FD-SBP operators have preferential error properties, compared to the application of the first-derivative operator twice. In particular, the results show that they lead to both a reduction in the global error as well as an increase in the convergence rate. Moreover, the hybrid Gauss-trapezoidal-Lobatto and hybrid Gauss-trapezoidal operators have competitive error characteristics compared to classical FD-SBP operators and represent an attractive means of taking advantage of the GSBP concept within the context of existing FD-SBP codes.  

\subsection{Variable coefficient linear convection-diffusion equation}
Figures \ref{CGL_CONV_VAR} and \ref{CSBP_CONV_VAR} display the convergence of the various operators for the variable-coefficient problem, where all operators were implemented using an element approach. Convergence rates of the SBP norm of the error, calculated by (\ref{solnerror}), are given in Table \ref{tableVar}, where the rates have been computed as the slope of the best line of fit through the highlighted points in the figures.

The general pattern is similar to the constant-coefficient problem; with few exceptions, the order-matched operators have smaller global error compared to the application of the first-derivative operator twice. The numerical simulations show that the proposed construction of compatible order-matched GSBP operators on pseudo-spectral nodal distributions using (\ref{Form2}) retains the preferential error characteristics observed for order-matched operators for the constant-coefficient linear convection-diffusion equation. For element-based operators, this implies that, for parabolic problems, order-matched operators are more efficient that the application of the first-derivative operator twice, since both have the same number of floating-point operations. For operators with a repeating interior stencil, the situation is less clear; on the one hand, for all $p$, the order-matched operators have preferential error characteristics, on the other hand, for $p< 3$, they require more floating-point operations to compute the derivatives on the interior.
\begin{figure}[t!]
\begin{center}
\subfigure[]{\includegraphics[width = 12cm]{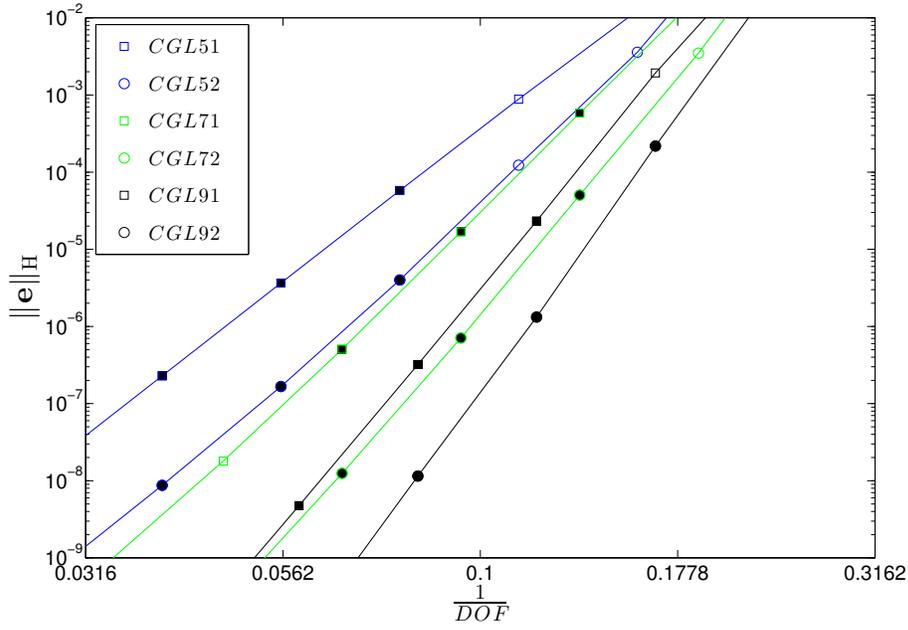}}
\subfigure[]{\includegraphics[width = 12cm]{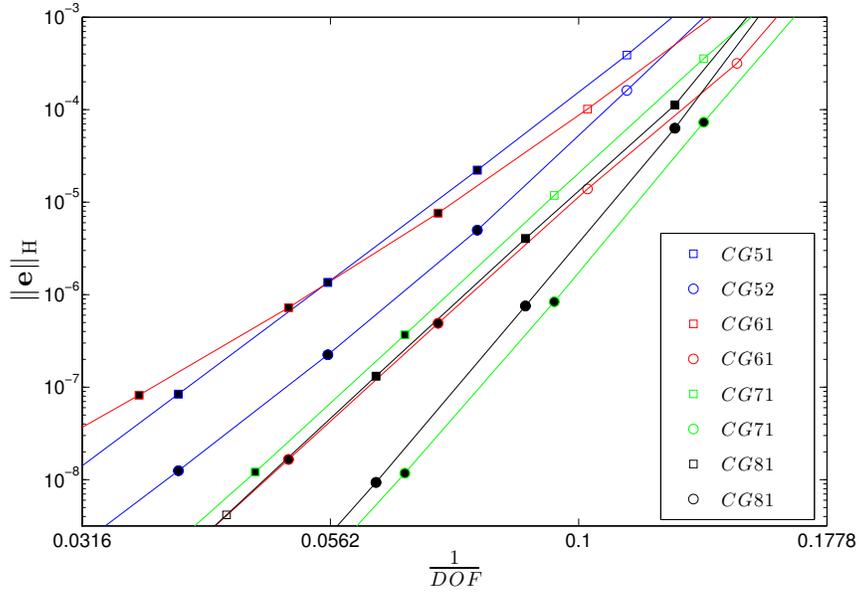}}
\end{center}
\caption{$\lVert \mathbf{e}\rVert_{\mat{H}}$ of the solution to (\ref{SDLCD}) and (\ref{BCICLCD}) with constant coefficients using a) Chebyshev-Gauss-Lobatto quadrature nodes, and b) Chebyshev-Gauss quadrature nodes. Filled in markers represent the points used to compute the rate of convergence.}
\label{CGL_CONV_CONST}
\end{figure}

\begin{figure}[t!]
\begin{center}
\includegraphics[width = 12cm]{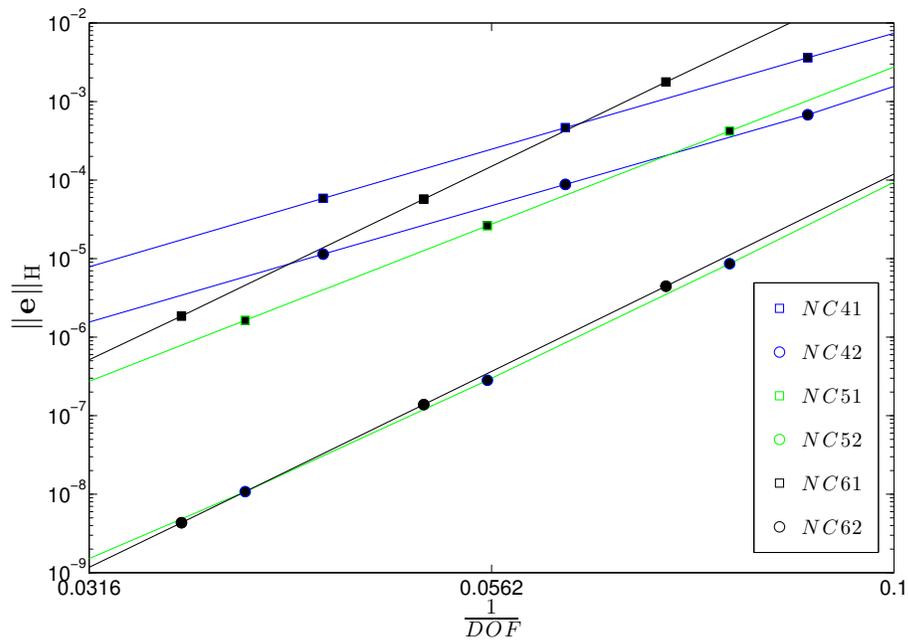}\\
\end{center}
\caption{$\lVert \mathbf{e}\rVert_{\mat{H}}$ of the solution to (\ref{SDLCD}) and (\ref{BCICLCD}) with constant coefficients using Newton-Cotes quadrature nodes. Filled in markers represent the points used to compute the rate of convergence.}
\label{CG_CONV_CONST}
\end{figure}

\begin{figure}[t!]
\begin{center}
\subfigure[]{\includegraphics[width = 9cm]{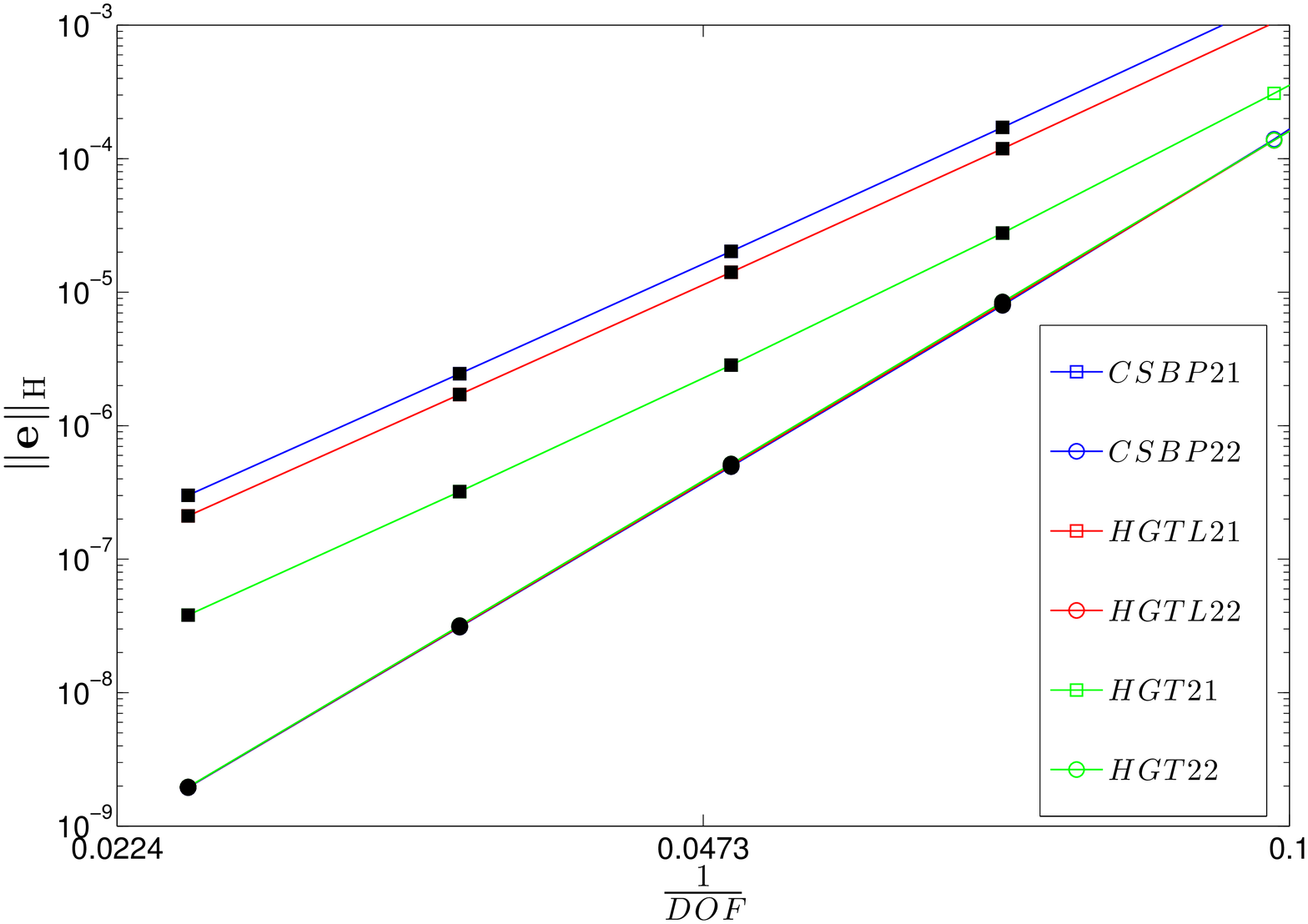}}
\subfigure[]{\includegraphics[width = 9cm]{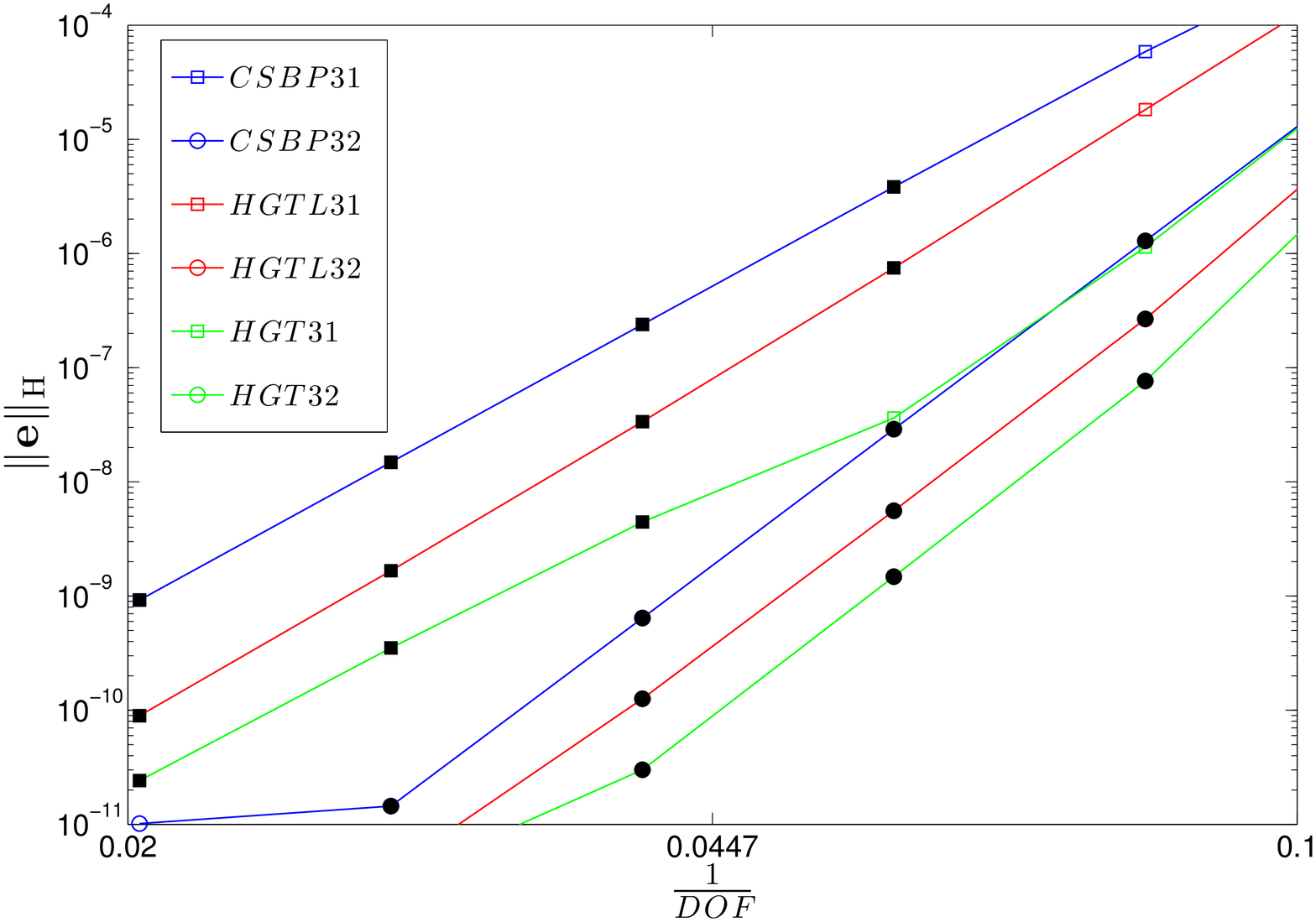}}\\
\subfigure[]{\includegraphics[width = 9cm]{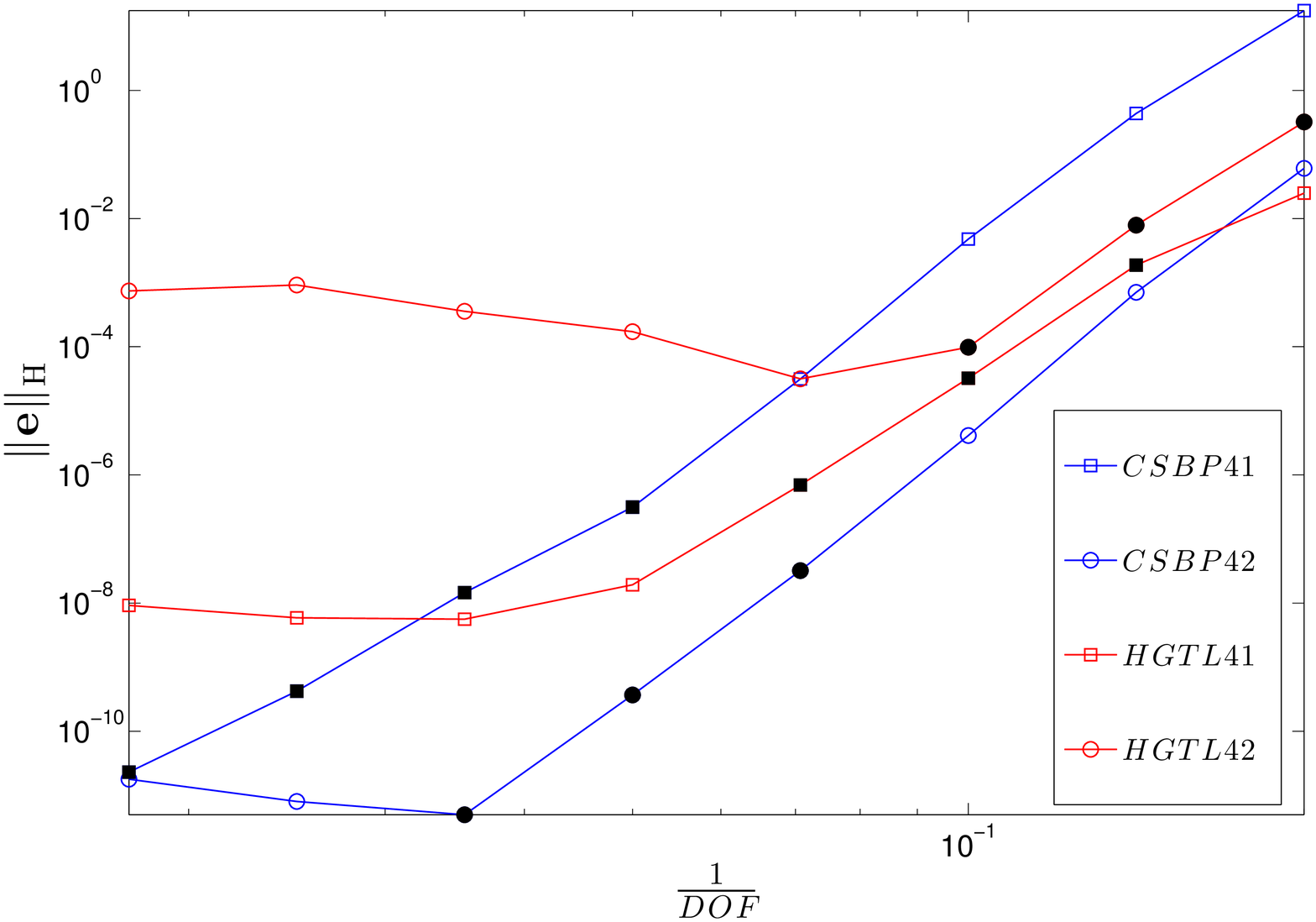}}
\end{center}
\caption{$\lVert \mathbf{e}\rVert_{\mat{H}}$ of the solution to (\ref{SDLCD}) and (\ref{BCICLCD}) with constant coefficients using operators with a repeating interior stencil with a) $p=2$, b) $p=3$, and c) $p=4$. Filled in markers represent the points used to compute the rate of convergence.}
\label{CSBP_CONV_CONST}
\end{figure}
\begin{landscape}
\begin{table}[htdp]
\caption{Order of truncation term for the solution of (\ref{SLCD}) and (\ref{BCICLCD}) with a constant coefficient}
\begin{center}
\begin{tabular}{cccccccccccc}
\hline\hline
Operator&Order&Operator&Order&Operator&Order&Operator&Order&Operator&Order&Operator&Order\\
\hline
CGL51&3.99&CG51&4.0208&NC41&2.9744&CSBP21&3.0517&CSBP31&4.0072&CSBP41&4.633\\
CGL52&4.4206&CG52&4.3191&NC42&2.9479&CSBP22&4&CSBP32&5.4836&CSBP42&6.3271\\
CGL71&5.0832&CG61&3.2687&NC51&4.0036&HGTL21&3.046&HGTL31&4.3422&HGTL41&5.6999\\
CGL72&5.9918&CG62&4.8767&NC52&4.8215&HGTL22&4.007&HGTL32&5.5252&HGTL42&5.839\\
CGL91&6.1273&CG71&4.8434&NC61&4.9503&HGT21&3.1661&HGT31&3.762\\
CGL92&7.1048&CG72&6.3012&NC62&5.0016&HGT22&4.0213&HGT32&5.6568\\
&&CG81&4.8698\\
&&CG82&6.3601\\
\hline
\end{tabular}
\end{center}
\label{tableConstant}
\end{table}

\begin{table}[htdp]
\caption{Order of truncation term for the solution of (\ref{SLCD}) and (\ref{BCICLCD}) with a variable coefficient}
\begin{center}
\begin{tabular}{cccccccccc}
\hline\hline
Operator&Order&Operator&Order&Operator&Order&Operator&Order&Operator&Order\\
\hline
CG51&4.0218&NC41&2.9841&CSBP21&3.055&CSBP31&3.993&CSBP41&5.6502\\
CG52&4.3093&NC42&3.1205&CSBP22&3.8867&CSBP32&5.3169&CSBP42&6.6792\\
CG61&3.2477&NC51&4.0042&HGTL21&3.0512&HGTL31&4.3196&HGTL41&5.8675\\
CG62&4.8252&NC52&4.8013&HGTL22&3.9834&HGTL32&5.2693&HGTL42&5.7629\\
CG71&4.8306&NC61&4.953&HGT21&3.1764&HGT31&3.6796\\
CG72&6.1742&NC62&4.979&HGT22&4.0926&HGT32&6.1433\\
CG81&4.8804\\
CG82&6.4079\\
\hline
\end{tabular}
\end{center}
\label{tableVar}
\end{table}
\end{landscape}
\begin{figure}[t!]
\begin{center}
\subfigure[]{\includegraphics[width = 12cm]{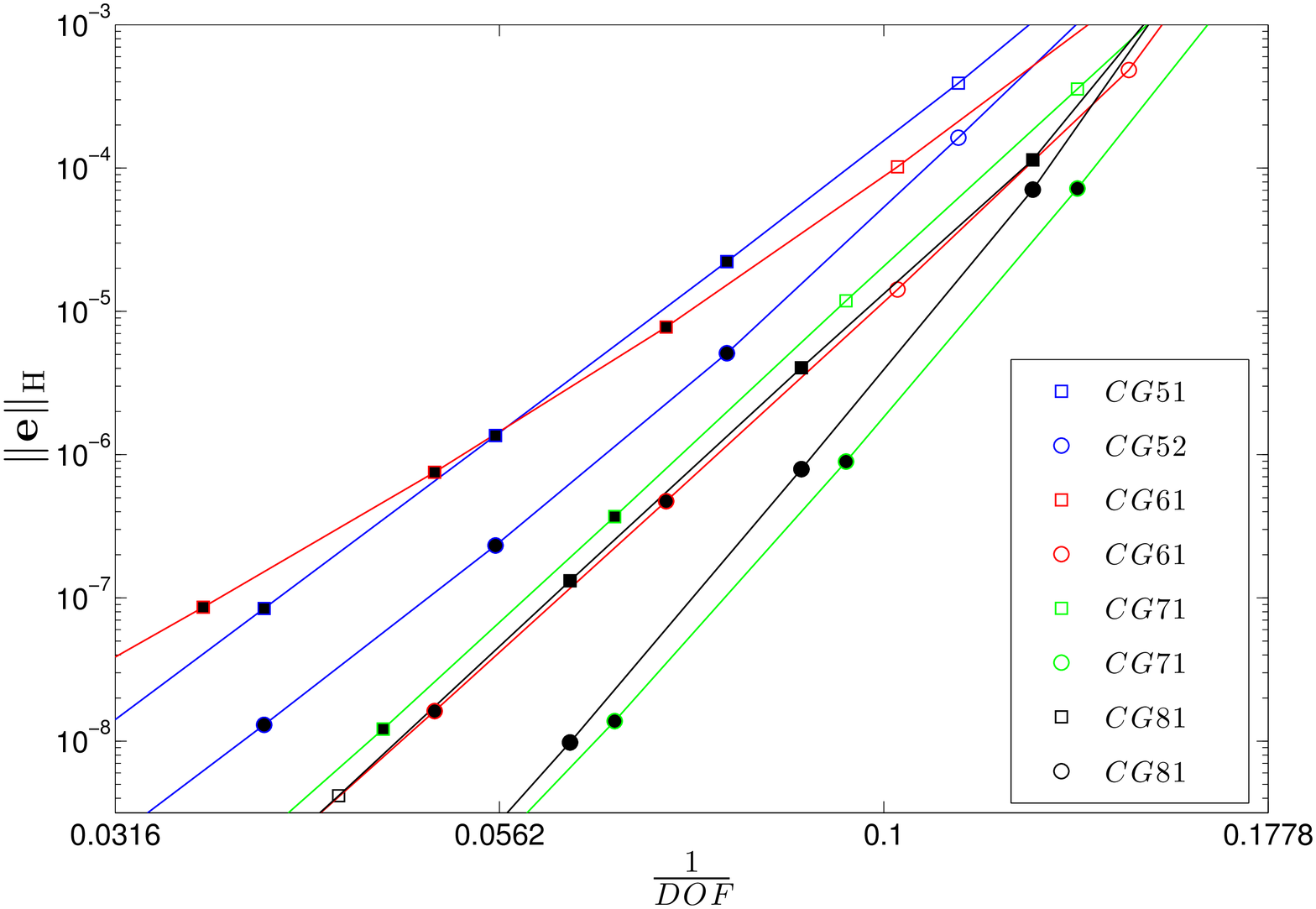}}
\subfigure[]{\includegraphics[width = 12cm]{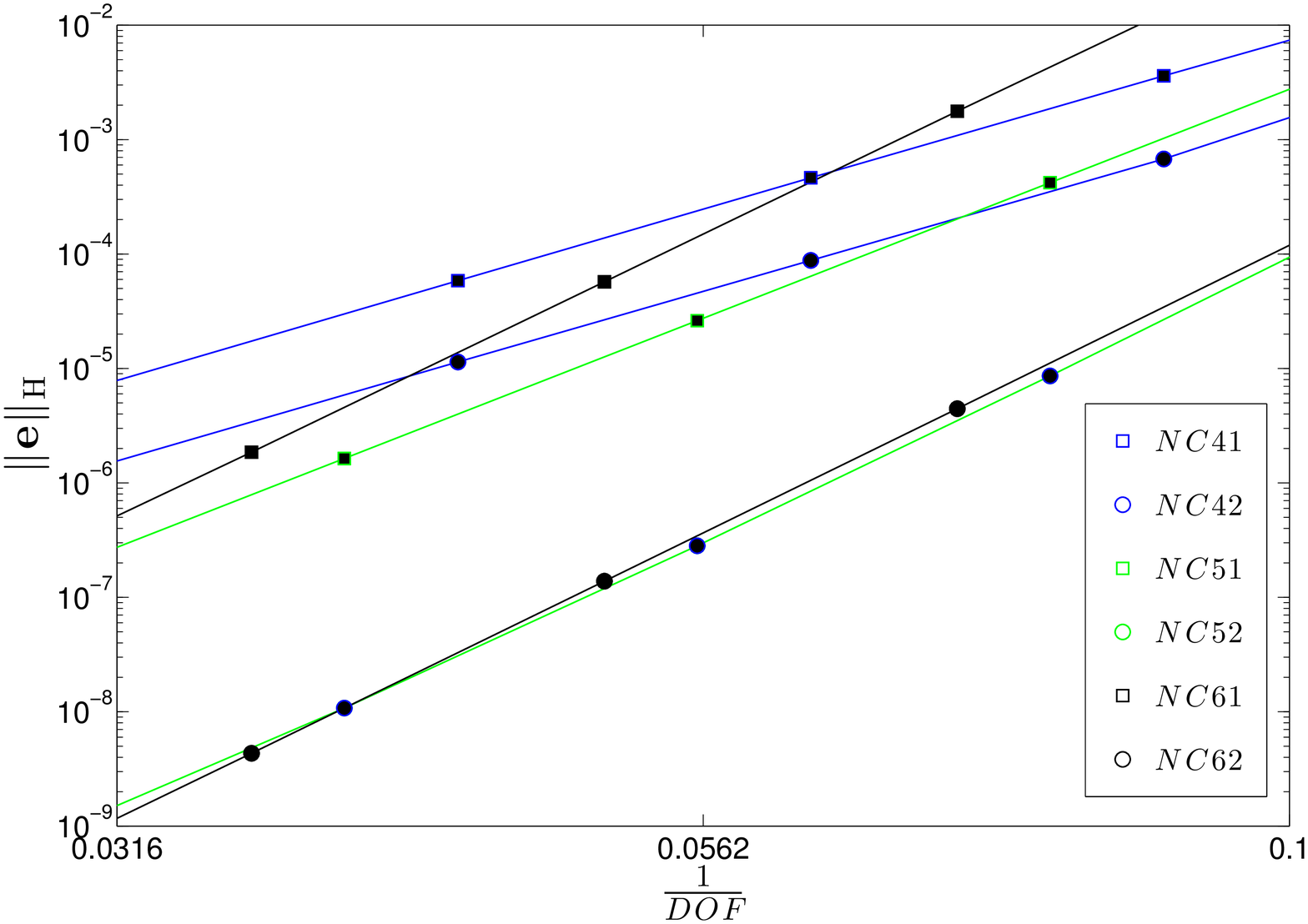}}
\end{center}
\caption{$\lVert \mathbf{e}\rVert_{\mat{H}}$ of the solution to (\ref{SDLCD}) and (\ref{BCICLCD}) with variable coefficients using a) Chebyshev-Gauss quadrature nodes, and b) Newton-Cotes quadrature nodes. Filled in markers represent the points used to compute the rate of convergence.}
\label{CGL_CONV_VAR}
\end{figure}

\begin{figure}[t!]
\begin{center}
\subfigure[]{\includegraphics[width = 9cm]{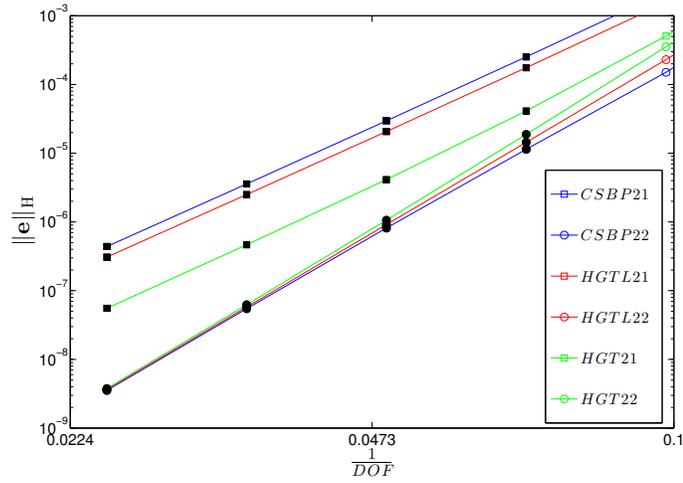}}
\subfigure[]{\includegraphics[width = 9cm]{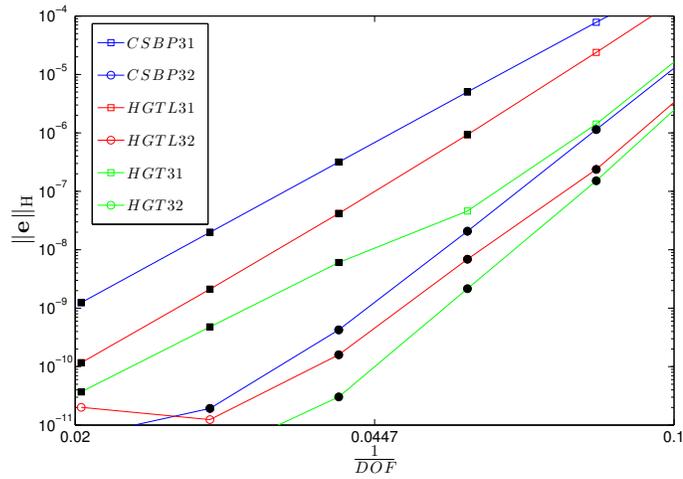}}\\
\subfigure[]{\includegraphics[width = 9cm]{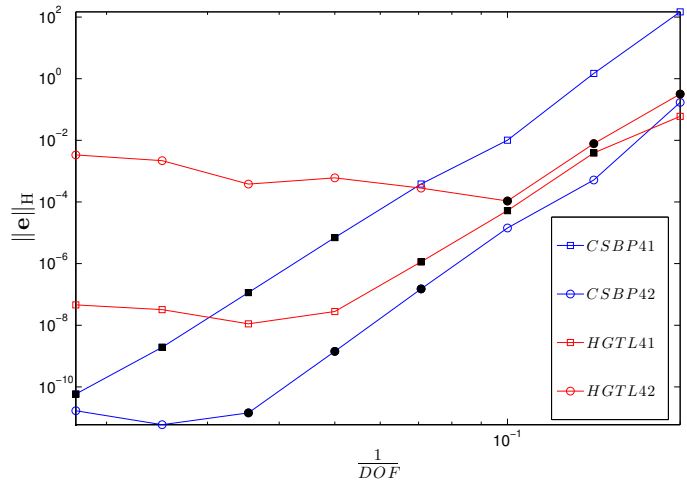}}
\end{center}
\caption{$\lVert \mathbf{e}\rVert_{\mat{H}}$ of the solution to (\ref{SDLCD}) and (\ref{BCICLCD}) with variable coefficients using operators with a repeating interior stencil with a) $p=2$, b) $p=3$, and c) $p=4$. Filled in markers represent the points used to compute the rate of convergence.}
\label{CSBP_CONV_VAR}
\end{figure}
\clearpage
\section{Conclusions and future work}\label{Conclusions and future work}
In this paper, we extended the generalized summation-by-parts operators developed in \cite{DCDRF2014} to the second derivative with variable coefficients. We proposed definitions for operators that are one order more accurate than the application of the first-derivative operator twice. Both non-compatible and compatible operators were considered. These operators were found to have preferential error properties, relative to the application of the first-derivative operator twice, for solving the linear convection-diffusion equation with constant and variable-coefficient second-derivative terms. The results suggest that for element-based operators, using order-matched operators should lead to more efficient discretization schemes. For operators with a repeating interior stencil, for $p>3$, this should also be the case. However, for $p<3$, though the operators are more accurate, they are more expensive to compute and further analysis is required. The following areas of future work follow naturally from this paper:
\begin{itemize}
	\item Extending GSBP operators for the second derivative to multi-dimensional operators that do not depend on Kronecker products; and
	\item Deriving SATs that lead to dual-consistent schemes for order-matched GSBP operators which could potentially lead to superconvergent functionals (see Hicken and Zingg \cite{Hicken2011b}).
\end{itemize}
\bibliographystyle{siam}
\bibliography{Bib}

\end{document}